\numberwithin{equation}{section}
\setlist{leftmargin=*,label={\rm(\arabic*)}}
\theoremstyle{plain}
\newtheorem{thm}{Theorem}[section]
\newtheorem{prop}[thm]{Proposition}
\newtheorem{cor}[thm]{Corollary}
\newtheorem{lem}[thm]{Lemma}
\theoremstyle{definition}
\newtheorem{exm}[thm]{Example}
\theoremstyle{remark}
\newtheorem{rmk}[thm]{Remark}
\DeclareMathOperator{\Hom}{Hom}
\DeclareMathOperator{\Rad}{Rad}
\DeclareMathOperator{\Ext}{Ext}
\DeclareMathOperator{\uRad}{\underline{Rad}}
\DeclareMathOperator{\Ker}{Ker}
\DeclareMathOperator{\Coker}{Coker}
\renewcommand{\Im}{\operatorname{Im}}
\DeclareMathOperator{\rad}{rad}
\DeclareMathOperator{\Diff}{Diff}
\DeclareMathOperator{\uDiff}{\underline{Diff}}
\def\presuper#1#2%
\def\presub#1#2%
\newcommand{\lto}{\longrightarrow}
\mathchardef\mhyphen="2D
\renewcommand{\-}{\mhyphen}
\newcommand{\op}{\mathrm{op}}
\newcommand{\Mod}{\mathrm{Mod}}
\newcommand{\proj}{\mathrm{proj}}
\newcommand{\Proj}{\mathrm{Proj}}
\newcommand{\GProj}{\mathrm{GProj}}
\newcommand{\uGProj}{\underline{\mathrm{GProj}}}
\newcommand{\bbZ}{\mathbb{Z}}
\renewcommand{\S}{\mathcal{S}}
\newcommand{\T}{\mathcal{T}}
\begin{document}
\title[Differential projective modules]
      {Differential projective modules over algebras with radical square zero}
\author[Dawei Shen]{Dawei Shen}
\address{School of Mathematics and Statistics \\
         Henan University \\
         Kaifeng, Henan 475004 \\
         P. R. China}
\email{dwshen@vip.henu.edu.cn}
\subjclass[2010]{Primary 16G10; Secondary 16G50, 18G25}
\keywords{Gorenstein projective module, representation of quivers, differential module,
algebra with radical square zero, virtually Gorenstein algebra}
\date{\today}

\begin{abstract}
  Let $Q$ be a finite quiver and $\Lambda$ be the radical square zero algebra of $Q$ over a field.
  We give a full and dense functor from the category of reduced
  differential projective modules over $\Lambda$
  to the category of representations of the opposite of $Q$.
  If moreover $Q$ has oriented cycles and $Q$ is not a basic cycle,
  we prove that the algebra of dual numbers over $\Lambda$ is not virtually Gorenstein.
\end{abstract}
\maketitle

\section{Introduction}
Given a ring $\Lambda$,
recall that a {\em differential $\Lambda$-module} is a pair $(M, d)$,
where $M$ is a $\Lambda$-module
and $d$ is an endomorphism of $M$ such that $d^2 = 0$.
If $(M, d),(N, \delta)$ are differential $\Lambda$-modules,
a {\em differential $\Lambda$-module map} $f\colon (M, d)\to (N, \delta)$
is  a $\Lambda$-module map $f\colon M \to N$ such that $f d = \delta f$.
A  $\Lambda$-module map $f\colon (M, d)\to (N, \delta)$ is {\em null homotopic}
if there exists a $\Lambda$-module map
$r\colon M \to N$ such that $f = r d + \delta r$.
Note that  differential modules over $\Lambda$ are just   modules
over  the ring  $\Lambda[\epsilon]$ of dual numbers over $\Lambda$.

Let $(M,d)$ be a differential $\Lambda$-module.
The {\em shift}  $\Sigma(M,d)$  of $(M,d)$ is  $(M, -d)$.
Recall that   $(M,d)$ is {\em contractible}
if the identity map of $M$ is null homotopic,
$(M,d)$ is  {\em reduced} if it has no nonzero contractible direct summands,
and $(M,d)$ is {\em exact} if $\Ker d = \Im d$.
Every contractible differential $\Lambda$-module is exact.

By the term {\em differential projective $\Lambda$-modules},
we mean differential $\Lambda$-modules
which are projective as $\Lambda$-modules.
In ~\cite{RZ2017}  differential finitely generated projective $\Lambda$-module
are called perfect differential $\Lambda$-modules.

The graded differential modules, namely the complexes, have been studied by many authors.
However, few  papers investigate the differential modules in detail.
L. ~L.  ~Avramov, R.-O. ~Buchweitz and S. ~Iyengar ~\cite{ABI2007}
study the projective class as well as  free class and flat class for differential modules.
C. ~M.  ~Ringel and P. ~Zhang ~\cite{RZ2017} investigate
the perfect differential modules over  path algebras,
they prove that the homology functor gives a bijection
from the reduced perfect differential modules
to the finitely generated modules  over  path algebras.
J. Wei ~\cite{Wei2015} studies the Gorenstein homological theory
for differential modules and extends their bijection.

The study of differential modules is  related to
the Gorenstein homological theory.
M. ~Auslander and M. ~Bridger ~\cite{AB1969} introduce
the notion of modules of G-dimension zero over two-sided Noetherian rings.
This kind of modules are also called totally reflexive modules ~\cite{AM2002}.
E. ~E. ~Enochs and O. ~M. ~G. ~Jenda  ~\cite{EJ1995,EJ2000} extend their ideas and
introduce the notion  of Gorenstein projective modules,
Gorenstein injective modules and Gorenstein flat modules for arbitrary rings.
In particular, the
totally reflexive modules are just
the finitely generated  Gorenstein projective
modules for two-sided Noetherian rings.

The Gorenstein projective modules over algebras
with radical square zero have been well studied.
X.-W. ~Chen ~\cite{Che2012} shows that
a connected  Artin algebra  with radical square zero is either selfinjective or CM-free.
Recall that an Artin algebra is called CM-free if every totally reflexive module is projective.
C. ~M. ~Ringel and B.-L. ~Xiong ~\cite{RX2012} extend this result to arbitrary Gorenstein projective modules.

However, the Gorenstein projective modules over  algebras with radical cubic zero are quite complicated.
Y. ~Yoshino ~\cite{Yos2003} studies a class of commutative local Artin algebras with radical cubic zero,
over these algebras the  simple  module has no right approximations by the totally reflexive modules.
They are  not virtually Gorenstein algebras in the sense of ~\cite{Bel2005}.

The totally reflexive modules over $S_n=k[X,Y_1,\cdots,Y_n]/(X^2,Y_iY_j)$ are studied by
D. ~A. ~Rangel  ~Tracy ~\cite{Ran2015},
where  $k$ is a field, $n\geq 2$ and $1\leq i,j\leq n$.
The main result gives a bijection  from the reduced totally reflexive modules over $S_n$
to the  finite-dimensional modules over the free algebra  of $n$ variables.

Inspired their works,
we investigate the differential projective modules
over Artin algebras with radical square zero.

Let $k$ be a field and $Q$ be a finite quiver.
Denote by $kQ$ the path algebra of $Q$.
Let $J$ be the arrow ideal of $kQ$,
then the quotient algebra $kQ/J^2$ is an Artin algebra with radical square  zero.
Let $Q^\op$ be the opposite quiver of $Q$.

We construct a ``Koszul dual functor"  $F$
from the category of reduced differential projective $kQ/J^2$-modules
to the category of $kQ^\op$-modules.

Denote by $\Diff(kQ/J^2\-\Proj)$ the Frobenius category
of all differential projective $kQ/J^2$-modules \cite{Hap1988}.
The homotopy category $\uDiff(kQ/J^2\-\Proj)$
of all differential projective $kQ/J^2$-modules
 is a triangulated category \cite{Ver1996}.

Denote by $\Diff_0(kQ/J^2\-\Proj)$ the full subcategory
of $\Diff(kQ/J^2\-\Proj)$ formed by  reduced differential projective $kQ/J^2$-modules.
We recall  the abelian category $kQ^\op\-\Mod$  of all $kQ^\op$-modules.

The following is the main result of this paper; see also \cite{Ran2015}, compare \cite{BGS1996,RZ2017}.

\begin{thm} \label{thm:mainthm1}
Let $k$ be a field and $Q$ be a finite quiver.
Then taking the top makes a functor
$F\colon \Diff_0(kQ/J^2\-\Proj) \to kQ^\op\-\Mod$ from the category of reduced differential projective $kQ/J^2$-modules
to the category of $kQ^\op$-modules.
Moreover,
\begin{enumerate}
  \item $F$ is full, dense, and detects the isomorphisms;
  \item $F$ is exact and commutes with all small coproducts;
  \item $F$ vanishes on all null-homotopic maps;
  \item For any $M, N$ in $\Diff_0(kQ/J^2\-\Proj)$, there is an isomorphism
  \[{\uDiff(kQ/J^2\-\Proj)}(M, N) \simeq
  \Hom_{kQ^\op}(F(M), F(N)) \coprod \Ext^1_{kQ^\op}(F(M), F\Sigma (N)).\]
\end{enumerate}
\end{thm}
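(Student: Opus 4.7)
First, put every projective $\Lambda$-module in the normal form $P \cong \Lambda \otimes_{kQ_0} W$ with $W = P/JP$, available because $kQ_0 = \Lambda/J$ is semisimple. Writing $V := J$ (so $V^2 = 0$), every $\Lambda$-linear endomorphism $d$ of $P$ is uniquely determined by its restriction to $W$, which splits into a pair $(\delta, \partial)$ of $kQ_0$-linear maps $\delta\colon W \to W$ and $\partial\colon W \to V \otimes_{kQ_0} W$; the condition $d^2 = 0$ becomes $\delta^2 = 0$ together with $\partial \delta + (1_V \otimes \delta)\partial = 0$. The first key step is the characterization \emph{$(P, d)$ is reduced iff $\delta = 0$}: if $\delta(w_0) \neq 0$ for some $w_0 \in e_i W$, then $w_0$ and $\delta(w_0)$ are automatically $k$-linearly independent (because $\delta^2 = 0$), so for any lift $\tilde w_0 \in e_i P$ the submodule $T := \Lambda \tilde w_0 + \Lambda d(\tilde w_0)$ is a rank-two free $\Lambda e_i$-direct summand of $P$, is $d$-stable, and is isomorphic to the standard contractible $\Lambda e_i[\epsilon]$. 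Since contractibles are projective--injective in the Frobenius category $\Diff(kQ/J^2\-\Proj)$, the admissible inclusion $T \hookrightarrow (P,d)$ splits as differential modules, contradicting reducedness.

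Once $\delta = 0$, one has $d = \partial$; decomposing $\partial$ along the arrows $\alpha\colon i \to j$ of $Q$ produces linear maps $\partial_\alpha\colon W_j \to W_i$, one for each arrow $\alpha^{\mathrm{op}}\colon j \to i$ of $Q^{\mathrm{op}}$, which is exactly the data of a representation of $Q^{\mathrm{op}}$ on $W$. Define $F(P, d) := (W, \partial)$. For $f\colon (P, d) \to (P', d')$ between reduced objects, write $f|_W = (\varphi, \psi)$ with $\varphi\colon W \to W'$ and $\psi\colon W \to V \otimes_{kQ_0} W'$; the equation $fd = d'f$ reduces, using $\delta = \delta' = 0$, to the single condition $(1_V \otimes \varphi)\partial = \partial' \varphi$, which is exactly $kQ^{\mathrm{op}}$-linearity of $\varphi$, and we set $F(f) := \varphi$. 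Parts (1)--(3) now follow readily: fullness by lifting $\varphi$ to $(\varphi, 0)$; density by the assignment $(W, \partial) \mapsto (\Lambda \otimes_{kQ_0} W, \partial)$; detection of isomorphisms by Nakayama's lemma; exactness because conflations in the Frobenius category $\Diff_0(kQ/J^2\-\Proj)$ are $\Lambda$-split; preservation of small coproducts because the top functor is a left adjoint; and vanishing on null-homotopies by the direct identity $(rd + d'r)|_W = \partial'\rho + (1_V \otimes \rho)\partial \in V \otimes W'$ for $r = (\rho, \sigma)$.

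For part~(4), the morphism description gives
\[
\Hom_{\Diff_0(kQ/J^2\-\Proj)}\bigl((P,d),(P',d')\bigr) \,\cong\, \Hom_{kQ^{\mathrm{op}}}(W, W') \,\oplus\, \Hom_{kQ_0}(W, V \otimes_{kQ_0} W'),
\]
while the previous computation identifies the null-homotopic maps with the subspace $\{(0,\, \partial'\rho + (1_V \otimes \rho)\partial) : \rho \in \Hom_{kQ_0}(W, W')\}$ of the second summand. Since $kQ^{\mathrm{op}}$ is hereditary, the standard two-term projective resolution of $W$ expresses $\Ext^1_{kQ^{\mathrm{op}}}(W, W'')$ as the cokernel of $\rho \mapsto \partial''\rho - (1_V \otimes \rho)\partial$; substituting $W'' = F\Sigma N = (W', -\partial')$ flips the sign on $\partial''$ and matches this cokernel with the null-homotopy quotient in the second summand, yielding the stated isomorphism. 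The main obstacle I expect is the reduced-iff-$\delta$-is-zero step, because it requires both that $T$ be admissible in the Frobenius structure and that the resulting splitting of $(P,d)$ actually peel off; afterwards the proof is careful bookkeeping of decompositions and signs, with the sign match in part~(4) being the other place that deserves attention.
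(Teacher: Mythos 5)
Your proposal is correct and, at its core, runs along the same lines as the paper's proof: your normal form $P\simeq kQ/J^2\otimes_{kQ_0}W$ and the splitting of a module map into the pair $(\varphi,\psi)$ is exactly the paper's short exact sequence in Lemma~\ref{lem:full} together with the isomorphism $\gamma\colon\Rad(M,N)\simeq \mathrm{E}(F(M),F(N))$ of Lemma~\ref{lem:gamma}; your computation that $fd=d'f$ amounts to $kQ^\op$-linearity of $\varphi$ and that null homotopies are the maps $(0,\partial'\rho+(1\otimes\rho)\partial)$ is Lemma~\ref{lem:gammakey}; and your proof of (4) via the split extension of $\Hom_{kQ^\op}(F(M),F(N))$ by the homotopy quotient, with the sign absorbed into $F\Sigma(N)$, is Lemma~\ref{lem:rad-ext}. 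Where you genuinely differ is that you make self-contained the two ingredients the paper imports: the equivalence ``reduced $\Leftrightarrow$ the differential is radical'' (your $\delta=0$), which the paper obtains from Krause's appendix on minimal complexes by viewing a differential module as a one-periodic complex, you prove directly by splitting off the contractible summand $T=\Lambda\tilde w_0+\Lambda d(\tilde w_0)$; and the description of $\Ext^1_{kQ^\op}$ as $\mathrm{E}/\mathrm{E}_0$, which the paper cites from Gabriel--Roiter, you recover from the standard two-term projective resolution over the hereditary algebra $kQ^\op$. Your splitting argument does check out: $w_0,\delta(w_0)$ are independent since $\delta^2=0$, $T$ is free of rank two and a $\Lambda$-direct summand because its top embeds into $P/\rad P$ and $J^2=0$, and the resulting $\Lambda$-split conflation $T\to P\to P/T$ splits in $\Diff(kQ/J^2\-\Proj)$ since contractibles are the projective-injective objects there. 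The one point you should add is the converse direction of your characterization, which you assert but do not argue: it is needed to know that your density construction $(W,\partial)\mapsto (kQ/J^2\otimes_{kQ_0}W,\partial)$ actually lands in $\Diff_0(kQ/J^2\-\Proj)$. It is, however, immediate: a nonzero contractible summand $C$ of a module with radical differential would satisfy $1_C=rd_C+d_Cr$ with $d_C$ radical, forcing $C=\rad C$ and hence $C=0$ because $J^2=0$.
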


The ``Koszul dual functor" $F$ has a good restriction on some full subcategories.
More precisely, we have the following.

\begin{prop} \label{prop:mainprop2}
Let $M$ be a reduced differential projective $kQ/J^2$-module in the homotopy category $\uDiff(kQ/J^2\-\Proj)$.
\begin{enumerate}
  \item $M$ is finite dimensional if and only if $F(M)$ is  finite dimensional.
  \item $M$ is compact  if and only if $F(M)$ is finitely presented.
  \item $M$ is exact if and only if $\Ext^n_{kQ^\op}(kQ_0, F(M))=0$ for $n=0,1$.
\end{enumerate}
\end{prop}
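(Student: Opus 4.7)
The plan is to tackle the three parts using the structural description of $F$. Write $T = F(M) = \top M$ and fix a $kQ_0$-linear splitting $M \cong T \oplus (J\otimes_{kQ_0} T)$, where $J$ denotes the arrow ideal. With respect to this splitting, $d$ decomposes into $kQ_0$-linear maps $a\colon T\to T$ (the induced differential on the top, with $a^2=0$) and $c\colon T \to J\otimes_{kQ_0}T$, while $\Lambda$-linearity forces $d$ to act as $\mathrm{id}_J\otimes a$ on $J\otimes_{kQ_0} T$. By the construction of $F$, the map $c$ is exactly the structure map of the $kQ^\op$-module $F(M)$.

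For \textbf{part (1)}, since $Q$ is finite each $\Lambda e_i$ is finite-dimensional, and $M\cong\bigoplus_i(\Lambda e_i)^{\dim_k F(M)_i}$ is finite-dimensional iff so is $F(M)$.

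For \textbf{part (2)}, by definition $M$ is compact iff ${\uDiff(kQ/J^2\-\Proj)}(M,-)$ preserves arbitrary coproducts. Parts (2) and (4) of Theorem~\ref{thm:mainthm1} reduce this to both $\Hom_{kQ^\op}(F(M),-)$ and $\Ext^1_{kQ^\op}(F(M),-)$ preserving coproducts on $kQ^\op$-modules, which standard module theory identifies with $F(M)$ being finitely presented: the Hom-condition forces finite generation, and in combination with a two-term projective presentation and a snake lemma argument, the $\Ext^1$-condition forces the first syzygy to be finitely generated as well.

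\textbf{Part (3)} combines two computations. First, from the projective resolution $0\to\rad(kQ^\op)\to kQ^\op\to kQ_0\to 0$ (valid since $kQ^\op$ is hereditary) together with the identification $\rad(kQ^\op)\cong kQ^\op\otimes_{kQ_0}J$ of left $kQ^\op$-modules, applying $\Hom_{kQ^\op}(-,F(M))$ yields the four-term exact sequence
\[
0\to\Hom_{kQ^\op}(kQ_0,F(M))\to F(M)\xrightarrow{c}J\otimes_{kQ_0}F(M)\to\Ext^1_{kQ^\op}(kQ_0,F(M))\to 0,
\]
so the vanishing of both Ext groups is equivalent to $c$ being bijective. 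Second, the short exact sequence $0\to(\rad M,\mathrm{id}_J\otimes a)\to(M,d)\to(T,a)\to 0$ of differential $\Lambda$-modules yields a $2$-periodic long exact homology sequence, and a standard diagram chase identifies its connecting homomorphism $\partial\colon H(T,a)\to J\otimes_{kQ_0}H(T,a)$ with the map induced by $c$; hence $(M,d)$ is exact iff $\partial$ is an isomorphism.

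The two computations are matched by a reduction step: for reduced $M$, one shows that one can assume $a=0$ (up to isomorphism of differential modules), in which case $H(T,a)=T=F(M)$ and $\partial=c$. The \emph{main obstacle} lies here: since $kQ_0$ is semisimple, the differential $kQ_0$-module $(T,a)$ decomposes as a zero-differential part plus a contractible $2$-term part, and one must lift the contractible part to a $d$-stable projective $\Lambda$-direct summand of $M$; such a summand is then itself contractible and must vanish by the reduced hypothesis. Constructing the lift uses projectivity of $M$ and the relation $J^2=0$, and is the technically most delicate step of the argument.
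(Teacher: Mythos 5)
Your proposal is essentially correct, and for parts (1) and (2) it runs along the same lines as the paper: the paper outsources your ``Hom and $\Ext^1$ commute with coproducts $\Leftrightarrow$ finitely presented'' step to Strebel's criterion for the hereditary algebra $kQ^\op$ (Lemma~\ref{lem:comhe}), while you re-derive it by the standard syzygy argument; both treatments gloss the same mild naturality point when splitting the coproduct comparison map into its $\Hom$ and $\Ext^1$ parts. Part (3) is where you genuinely diverge. The paper gets it in one line from the identification $H(M)\simeq{\uDiff(kQ/J^2\-\Proj)}(kQ/J^2,M)$, where $kQ/J^2$ carries the zero differential and has $F$-image $kQ_0$, so Theorem~\ref{thm:repiso}(4) (via Lemma~\ref{lem:fiso}(2)) gives $H(M)\simeq\Hom_{kQ^\op}(kQ_0,F(M))\coprod\Ext^1_{kQ^\op}(kQ_0,F(M))$; you instead compute both sides explicitly, which is longer but buys the concrete criterion that exactness of $M$ is equivalent to bijectivity of the structure map $c\colon F(M)\to J\otimes_{kQ_0}F(M)$. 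The one thing to correct is your assessment of the ``main obstacle'': no lifting of contractible summands is needed at all. By Lemma~\ref{lem:min}(1), $M$ being reduced means exactly that $d$ is a radical map, i.e.\ $\Im d\subseteq\rad M=J\otimes_{kQ_0}T$, and with respect to any $kQ_0$-linear splitting $M=T\oplus(J\otimes_{kQ_0}T)$ this says that your component $a$ vanishes on the nose, for every choice of splitting; the delicate argument you sketch (decomposing $(T,a)$ over the semisimple $kQ_0$ and lifting its contractible part to a contractible differential summand of $M$) is nothing but a reproof of that lemma, so you should simply cite it. With $a=0$ you also do not need the two-periodic long exact sequence: $\Ker d=\Ker c\oplus(J\otimes_{kQ_0}T)$ and $\Im d=\Im c$, so $H(M)\simeq\Ker c\oplus\Coker c$ directly. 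A last bookkeeping remark: the arrow span of $Q^\op$ is $J$ with its two $kQ_0$-actions interchanged, so $\rad(kQ^\op)\simeq kQ^\op\otimes_{kQ_0}J$ only after that identification; the four-term exact sequence you display, which is what the argument actually uses, is the correct one.
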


We give a compact generator for the homotopy category $\uDiff(kQ/J^2\-\Proj)$ as follows.
Let $C$ be the $kQ/J^2$-module $kQ/J^2 \otimes_{kQ_0} kQ^\op$
with a differential $d$ given by
$d (y\otimes z)=\sum_{\alpha\in Q_1} y\alpha\otimes \alpha^*z$
for $y\in  kQ/J^2,z\in kQ^\op$.
Here, $\alpha^* \in Q^\op$ is the reversed arrow  of $\alpha$.

We have the following.

\begin{thm} \label{thm:mainthm4}
The above $C$ is a compact generator for $\uDiff(kQ/J^2\-\Proj)$.
\end{thm}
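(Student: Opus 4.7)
My plan is to identify the representable functor $\uDiff(kQ/J^2\-\Proj)(C,-)$ with the Koszul dual functor $F$ of Theorem~\ref{thm:mainthm1}, so that compactness and generation both fall out of properties of $F$ already in hand.

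The first step is to verify that $(C,d)$ really lies in $\Diff_0(kQ/J^2\-\Proj)$. As a $kQ/J^2$-module, $C=\bigoplus_i (kQ/J^2)e_i\otimes_k e_ikQ^\op$ is projective; moreover the formula $d(y\otimes z)=\sum_\beta y\beta\otimes\beta^* z$ shows $d(C)\subseteq\rad(kQ/J^2)\cdot C$, so no contractible summand can split off. Next, I would compute $F(C)=\top C=kQ^\op$ at the level of vector spaces, and unwind $d$: for each arrow $\beta\colon j\to i$ of $Q$, the induced linear map $e_ikQ^\op\to e_jkQ^\op$ extracted from the differential is exactly left multiplication by $\beta^*$. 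Hence $F(C)$ is the \emph{regular} left $kQ^\op$-module.

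Once this is in hand, Theorem~\ref{thm:mainthm1}(4) collapses: the $\Ext^1$ summand vanishes by projectivity of the regular module, and the Hom summand is evaluation at $1$, giving a natural isomorphism
\[\uDiff(kQ/J^2\-\Proj)(C,M)\cong F(M).\]
Compactness of $C$ is then immediate, since $F$ preserves all small coproducts by Theorem~\ref{thm:mainthm1}(2), and coproducts in $\uDiff(kQ/J^2\-\Proj)$ are inherited from $\Diff(kQ/J^2\-\Proj)$ (a direct sum of reduced objects stays reduced, because the differential respects the summand decomposition).

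For the generator property, suppose $M\in\uDiff(kQ/J^2\-\Proj)$ satisfies $\uDiff(kQ/J^2\-\Proj)(C,\Sigma^nM)=0$ for all $n\in\bbZ$; replacing $M$ by a reduced representative, the isomorphism above forces $F(M)=0$, hence $\top M=0$, and a reduced projective $kQ/J^2$-module with vanishing top must vanish. I expect the main obstacle to be the careful identification of the $kQ^\op$-module structure on $F(C)$, namely verifying that the coefficients of $d$ translate into left multiplication by reversed arrows. Once this Koszul-type recognition is established, $C$ literally corresponds to the rank-one free $kQ^\op$-module under $F$, and compactness and generation both reduce to formal consequences of Theorem~\ref{thm:mainthm1}.
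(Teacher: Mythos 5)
Your proposal is correct and follows essentially the same route as the paper: the paper's Lemma~\ref{lem:fiso}(1) is exactly your identification $\uDiff(kQ/J^2\-\Proj)(C,M)\simeq F(M'')$ obtained from Theorem~\ref{thm:repiso}(4) together with $F(C)\simeq kQ^\op$ being projective, and both generation (vanishing top of a reduced projective forces it to vanish) and compactness (via $F$ commuting with coproducts, Theorem~\ref{thm:repiso}(2)) are argued the same way. The only cosmetic difference is that the paper explicitly decomposes arbitrary objects $T_\lambda=T'_\lambda\coprod T''_\lambda$ via Lemma~\ref{lem:min}(2), where you instead pass to reduced representatives and note that coproducts of reduced objects remain reduced.
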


Recall that a finite connected quiver $Q$ is a {\em basic cycle} if the number of vertices
is equal to the number of arrows in $Q$
and all arrows  form an oriented cycle.

The following gives a class of noncommutative algebras with  radical cubic zero
which are not  virtually Gorenstein algebras; compare ~\cite{Yos2003}.

\begin{thm}  \label{thm:mainthm3}
If $Q$ is a finite connected quiver with oriented cycles and $Q$ is not a basic cycle.
Then the algebra $kQ/J^2[\epsilon]$ of dual number over $kQ/J^2$ is not virtually Gorenstein.
\end{thm}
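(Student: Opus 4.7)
My plan is to invoke the characterization of virtually Gorenstein algebras from \cite{Bel2005}: an Artin algebra $\Gamma$ is virtually Gorenstein if and only if the stable category $\uGProj \Gamma$ is compactly generated by its full subcategory $\uGproj \Gamma$ of finitely generated Gorenstein projectives. Applied to $\Gamma = kQ/J^2[\epsilon]$, the strategy is to produce a compact object of $\uGProj \Gamma$ that is not a direct summand of any finitely generated Gorenstein projective, thereby obstructing the virtually Gorenstein property. The step I expect to be the main obstacle is the opening identification of $\uGProj(kQ/J^2[\epsilon])$ with $\uDiff(kQ/J^2\-\Proj)$; once it is in place, everything else is a direct translation via Theorem~\ref{thm:mainthm4} and Proposition~\ref{prop:mainprop2}.

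The first step is to identify $\uGProj(kQ/J^2[\epsilon])$ with $\uDiff(kQ/J^2\-\Proj)$. Every projective $kQ/J^2[\epsilon]$-module restricts to a projective module for $\Lambda := kQ/J^2$, so restricting a complete $kQ/J^2[\epsilon]$-projective resolution to $\Lambda$ forces any Gorenstein projective pair $(M,d)$ over $kQ/J^2[\epsilon]$ to have $M \in \GProj \Lambda$. Because $Q$ is connected, has oriented cycles, and is not a basic cycle, $\Lambda$ is not selfinjective, and Chen \cite{Che2012}, in the form extended by Ringel--Xiong \cite{RX2012} to arbitrary Gorenstein projective modules, gives $\GProj \Lambda = \Proj \Lambda$, so $M$ is $\Lambda$-projective. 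Conversely, every differential projective $\Lambda$-module $(M,d)$ sits in a short exact sequence $0 \to \Sigma(M,d) \to kQ/J^2[\epsilon] \otimes_\Lambda M \to (M,d) \to 0$ whose middle term is $kQ/J^2[\epsilon]$-projective; splicing this into a $2$-periodic complete resolution and verifying that $\Hom_{kQ/J^2[\epsilon]}(-, kQ/J^2[\epsilon])$ preserves exactness exhibits $(M,d)$ as Gorenstein projective over $kQ/J^2[\epsilon]$. Under the resulting identification, $\uGproj(kQ/J^2[\epsilon])$ corresponds precisely to the $(M,d)$ with $M$ finite-dimensional over $k$.

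With the identification in place, Theorem~\ref{thm:mainthm4} furnishes the compact generator $C$, so $\uGProj(kQ/J^2[\epsilon])$ is indeed compactly generated. To produce a compact object that is not a summand of any object of $\uGproj(kQ/J^2[\epsilon])$, I pick a vertex $i$ of $Q$ lying on an oriented cycle, which exists by hypothesis; then $kQ^\op e_i$ is an infinite-dimensional, finitely generated projective (hence finitely presented) $kQ^\op$-module. Using the density of $F$ from Theorem~\ref{thm:mainthm1}, I lift $kQ^\op e_i$ to some $M \in \Diff_0(kQ/J^2\-\Proj)$ with $F(M) \simeq kQ^\op e_i$. Proposition~\ref{prop:mainprop2}(2) then makes $M$ compact in $\uDiff(kQ/J^2\-\Proj)$, while Proposition~\ref{prop:mainprop2}(1) forces $M$ to be infinite-dimensional over $k$. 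Since summands of finite-dimensional modules are finite-dimensional, $M$ cannot be a direct summand of any object of $\uGproj(kQ/J^2[\epsilon])$, contradicting the characterization above and yielding the theorem.
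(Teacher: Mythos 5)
Your proof is correct and takes essentially the same approach as the paper's. The paper establishes the identification $\uGProj(kQ/J^2[\epsilon]) \simeq \uDiff(kQ/J^2\-\Proj)$ by citing Wei's theorem together with Ringel--Xiong (Proposition~\ref{prop:diffgor}(1)), then notes directly that the compact generator $C$ of Theorem~\ref{thm:comgen} is reduced and infinite-dimensional (using that $Q$ has oriented cycles), and concludes via the Beligiannis criterion (Lemma~\ref{lem:gorvir}(1)); your hands-on re-derivation of the identification and your choice of a lift of $kQ^{\op}e_i$ --- which is in fact a direct summand of $C$ --- are cosmetic variants of the same argument.
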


The present paper is organized as follows.
In Section ~2 and Section ~3, we recall some required facts about quivers and
radical square zero algebras, respectively.
In Section ~4, we construct the functor $F$ and prove  Theorem ~1.1.
In Section ~5, we study the restriction of $F$ and give the proofs of  Proposition ~1.2 and Theorem ~1.3.
In Section ~6, we study virtual Gorensteinness of algebras and prove Theorem ~1.4.

\section{Quivers and representations} \label{sec:pre}
In this section,
we recall some  facts about quivers and their representations.
We refer to \cite[III.1]{ARS1995} for more details.

A {\em finite quiver} $Q$ is a quadruple $(Q_0,Q_1;s,t)$,
where $Q_0$ is the finite set of vertices, $Q_1$ is the finite set of arrows,
and $s, t \colon Q_1 \to Q_0$ are the source  map and the target  map, respectively.
Denote by $e_i$ the {\em trivial path} at $i$ for $i\in Q_0$, where $s(e_i)=t(e_i)=i$.
A {\em nontrivial path} $p$ is a sequence  $\alpha_l\cdots \alpha_2\alpha_1$ of arrows,
where $l\geq 1$ and $t(\alpha_i)=s(\alpha_{i+1})$ for $1\leq i\leq l-1$.
Here, $s(p)=s(\alpha_1)$ and $t(p)=t(\alpha_l)$.
A nontrivial path $p$ is called an {\em oriented cycle} if $s(p)=t(p)$.
A finite quiver $Q$ is said to be {\em acyclic} if it has no oriented cycles.

Let $kQ$ be the  path algebra  of $Q$ over a field $k$.
Recall that  $kQ$ is a hereditary algebra
and $kQ$ is finite dimensional if and only if $Q$ is acyclic.

A {\em representation} of $Q$
is a collection $(M_i, M_\alpha)_{i\in Q_0, \alpha\in Q_1}$,
where $M_i$ is a $k$-vector space and
$M_\alpha$ is a $k$-linear map from $M_{s(\alpha)}$ to $M_{t(\alpha)}$.
If $M,N$ are representations of $Q$,
a morphism from $M$ to $N$ is  a collection
$(f_i)_{i\in Q_0}$, where $f_i$ is a $k$-linear map from $M_i$ to $N_i$ such that
$ f_{t(\alpha)}M_\alpha = N_\alpha f_{s(\alpha)}$ for each $\alpha\in Q_1$.

Recall that the category of representations of $Q$  is equivalent to
the category of  $kQ$-modules.
We identify a $kQ$-module with the associated  representation of $Q$.

Let $J$ be the {\em arrow ideal} of $kQ$; it is the ideal generated by all arrows in $Q$.
For  an {\em admissible} ideal $I$ of $kQ$ satisfying $J^n \subseteq I \subseteq J^2$ for some $n\geq 2$,
the quotient algebra $kQ/I$ is   finite dimensional  over $k$.
Recall that a $kQ/I$-module is  a $kQ$-module $M$ such that $wM= 0$
for every $w\in I$.

\section{Projective modules for radical square zero algebras} \label{sec:algsquare}
Let $k$ be a field and $Q$ be a finite quiver.
Recall that the unit element of the path algebra $kQ$ is $\sum_{i\in Q_0}e_i$,
where $e_i$ the trivial path at vertex $i$.

Let $J$ be arrow ideal of $kQ$, then $kQ/J^2$ is the radical square zero algebra of $Q$.
In this section, we study some facts about the projective $kQ/J^2$-modules.

\begin{lem} \label{lem:projpro}
Let $M$ be a  projective  $kQ/J^2$-module. Then we have
\begin{enumerate}
  \item $\rad^2(M) = 0$;
  \item $(\rad M)_i = \coprod_{\alpha\in Q_1,t(\alpha)=i} \Im M_\alpha$ for each $i\in Q_0$;
  \item $(\rad M)_{s(\alpha)} =\Ker M_\alpha$ for each $\alpha\in Q_1$;
  \item $(M/\rad M)_{s(\alpha)} = M_{s(\alpha)}/\Ker M_\alpha$  for each $\alpha\in Q_1$.
\end{enumerate}
\end{lem}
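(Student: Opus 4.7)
The plan is to handle (1) directly and to reduce parts (2)--(4) to the case of indecomposable projectives. Set $\Lambda := kQ/J^2$, so that $\rad\Lambda = J/J^2$ and $(\rad\Lambda)^2 = 0$. For (1), since $\Lambda$ is an Artin algebra we have $\rad M = (\rad\Lambda)M$ for every $\Lambda$-module $M$, hence $\rad^2 M \subseteq (\rad\Lambda)^2 M = 0$; no projectivity is needed.

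For (2), (3), (4), the starting point is the decomposition $M \simeq \coprod_{j\in Q_0} P_j^{(I_j)}$, where $P_j = \Lambda e_j$ and the $I_j$ are index sets. Both sides of each proposed equality are additive in $M$ (they send coproducts of modules to coproducts of vector spaces), so it suffices to verify the identities when $M = P_j$ for a single $j$.

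Then I would describe $P_j$ explicitly as a representation: $(P_j)_i = e_i\Lambda e_j$ has as basis the paths in $Q$ of length at most $1$ from $j$ to $i$, and the structure map $(P_j)_\alpha$ sends a path $p$ to $\alpha p$. Because $J^2 = 0$, this map annihilates every length-$1$ path in its source and, when $s(\alpha) = j$, sends $e_j$ to $\alpha$. Consequently, for each arrow $\alpha$ with $t(\alpha) = i$, the image $\Im(P_j)_\alpha$ is either the one-dimensional subspace $k\alpha$ of $(P_j)_i$ (when $s(\alpha) = j$) or zero; these one-dimensional subspaces for distinct $\alpha$ are distinct basis vectors of $(P_j)_i$ and together span $(\rad P_j)_i$, establishing (2). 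For (3), at vertex $s(\alpha)$ the kernel of $(P_j)_\alpha$ is the span of all length-$1$ paths in $(P_j)_{s(\alpha)}$, which is exactly $(\rad P_j)_{s(\alpha)}$; the case $s(\alpha) \neq j$ is automatic since then $(P_j)_{s(\alpha)} = (\rad P_j)_{s(\alpha)}$ and $(P_j)_\alpha = 0$. Finally, (4) is immediate from (3) by passing to the quotient.

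The only real work is the combinatorial verification inside $P_j$; once the basis structure is pinned down there are no subtleties. The main bookkeeping point is linear independence of the images, which reduces to the fact that distinct arrows of $Q$ give linearly independent elements of $kQ$.
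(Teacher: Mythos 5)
Your argument is correct in substance, but it takes a different route from the paper, and one ingredient deserves to be made explicit. The paper does not decompose $M$ at all: it observes that (1)--(4) hold for the regular module $kQ/J^2$, notes that a projective module is a direct summand of a coproduct of copies of $kQ/J^2$, and checks that each of the four statements is inherited under small coproducts and direct summands (radicals, kernels and images all commute with coproducts, and the directness/equality assertions restrict to summands). You instead write $M\simeq\coprod_{j\in Q_0}P_j^{(I_j)}$ with $P_j=\Lambda e_j$ and verify everything on the path basis of $P_j$; your explicit computation of $\Im (P_j)_\alpha$, $\Ker (P_j)_\alpha$ and $(\rad P_j)_i$ is accurate, and your proof of (1) via $\rad M=(\rad\Lambda)M$ and $(\rad\Lambda)^2=0$ is fine and indeed needs no projectivity. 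The point to flag is your ``starting point'': since the lemma is applied later to arbitrary, not necessarily finitely generated, projective modules (the category $\Diff(kQ/J^2\-\Proj)$ admits all small coproducts), the decomposition of an arbitrary projective into a coproduct of the indecomposables $P_j$ is not just Krull--Schmidt; it is the theorem of Bass that over a (left) perfect ring --- and the finite-dimensional algebra $kQ/J^2$ is perfect --- every projective module is a direct sum of principal indecomposables. This is true and citable, so there is no gap in the mathematics, but you should either cite it or sidestep it as the paper does, using only that $M$ is a direct summand of a free module and that the four properties pass to coproducts and to direct summands. The trade-off: your route gives a completely explicit, self-contained verification on $P_j$ at the cost of invoking Bass's structure theorem; the paper's route is shorter and avoids any decomposition theorem, at the cost of leaving the verification for the regular module as an ``observe''.
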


\begin{proof}
Observe that   (1)--(4) hold for the regular module over $kQ/J^2$.
Since $M$ is projective over $kQ/J^2$,
it is  a direct summand of direct sums of copies of $kQ/J^2$.
Since taking the radicals,  kernels and images commute with all small coproducts,
we infer that  (1)--(4) also hold  for $M$.
\end{proof}

Given a module $M$,
recall that the {\em radical}  $\rad M$ of  $M$
is the intersection of all maximal submodules of $M$,
and the {\em top} of $M$ is the  quotient module $M/\rad M$.

Let $f\colon M\to N$ be a $kQ/J^2$-module map between  projective $kQ/J^2$-modules.
Recall that $f$ is  {\em radical} if $\Im f \subseteq \rad N$.
Denote by $F(f)\colon M/\rad M \to N/ \rad N$ the induced map of $f$.
It follows that $f$ is radical if and only if $F(f)=0$.
Denote by $\Rad(M, N)$ the subspace of
$\Hom_{kQ/J^2}(M, N)$ formed by radical  maps.

\begin{lem} \label{lem:full}
For projective $kQ/J^2$-modules $M,N$, there is a short exact sequence
\[0 \to \Rad(M, N) \overset{\subset}\lto \Hom_{kQ/J^2}(M, N) \overset{F}\lto \Hom_{kQ_0}(M/\rad M, N/\rad N) \to 0.\]
\end{lem}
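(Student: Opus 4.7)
The plan is to verify exactness at the three spots of the sequence separately: $F$ is well-defined with the stated target, its kernel equals $\Rad(M,N)$, and it is surjective. The key conceptual point is that the tops $M/\rad M$ and $N/\rad N$ are semisimple $kQ/J^2$-modules (by Lemma~\ref{lem:projpro}(1), $\rad^2 M = 0 = \rad^2 N$), so every arrow acts as zero on them; consequently
\[\Hom_{kQ/J^2}(M/\rad M,\, N/\rad N) = \Hom_{kQ_0}(M/\rad M,\, N/\rad N).\]
This identification is what connects the two Hom groups in the sequence and will be used in both the well-definedness of $F$ and in the surjectivity argument.

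First I would verify that $F$ lands in the stated target. Any $kQ/J^2$-module map $f \colon M \to N$ satisfies $f(\rad M) \subseteq \rad N$, so it descends to a $kQ/J^2$-map $\bar f \colon M/\rad M \to N/\rad N$, which by the identification above is a $kQ_0$-map; set $F(f) = \bar f$. Next I would compute the kernel: by construction $F(f) = 0$ iff $\pi_N \circ f = 0$, iff $\Im f \subseteq \rad N$, iff $f \in \Rad(M,N)$. This is essentially by the definition of a radical map, so exactness at the middle term is immediate.

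For surjectivity, I would use the projectivity of $M$ over $kQ/J^2$. Given a $kQ_0$-map $g \colon M/\rad M \to N/\rad N$, reinterpret it as a $kQ/J^2$-map via the identification above, and compose with the canonical surjection $\pi_M \colon M \twoheadrightarrow M/\rad M$ to obtain a $kQ/J^2$-map $g\pi_M \colon M \to N/\rad N$. Since $\pi_N \colon N \twoheadrightarrow N/\rad N$ is a surjective $kQ/J^2$-map and $M$ is projective, there exists a lift $f \colon M \to N$ with $\pi_N f = g \pi_M$, and this lift satisfies $F(f) = g$ by construction.

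The argument is essentially formal and I do not expect a significant obstacle; if anything needs care, it is the observation that a $kQ_0$-linear map between semisimple $kQ/J^2$-modules is automatically $kQ/J^2$-linear, which is precisely what licenses the use of projectivity of $M$ over $kQ/J^2$ to lift $g$.
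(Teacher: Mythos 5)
Your proof is correct and follows essentially the same route as the paper: identify the target $\Hom_{kQ_0}$ group with $kQ/J^2$-module maps between tops, observe that the kernel of $F$ is $\Rad(M,N)$ by definition, and use projectivity of $M$ to lift any map on tops. One small quibble: the semisimplicity of $M/\rad M$ is not a consequence of $\rad^2 M = 0$ (Lemma~\ref{lem:projpro}(1)) — the top of any module over an Artin algebra is semisimple by the very definition of the radical; what $\rad(kQ/J^2)$ annihilating the top buys you (and what you actually use) is that $kQ_0$-linear maps between tops are automatically $kQ/J^2$-linear, which licenses the projectivity lift.
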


\begin{proof}
Let $g\colon  M/\rad M \to N/\rad N$  be a $kQ_0$-module map.
Since $M$ is projective over $kQ/J^2$,
we have $g = F(f)$ for some $kQ/J^2$-module map $f \colon M \to N$.
Then the map $F$ is surjective.
Since  $\Ker F = \Rad(M, N)$,
this gives rise to the desired short exact sequence.
\end{proof}

For $kQ_0$-modules $X$ and $Y$,
denote by $\mathrm{E}(X, Y)$  the $k$-vector space consisting of the collections $(f_{\alpha^*})_{\alpha \in Q_1}$,
where $f_{\alpha^*}\colon X_{t(\alpha)}\to Y_{s(\alpha)}$
is a $k$-linear map.

\begin{lem} \label{lem:gamma}
For projective $kQ/J^2$-modules $M,N$, there is an isomorphism
\[\gamma\colon\Rad(M, N) \overset{F}\lto
\Hom_{kQ_0}(M/\rad M, \rad N) \overset{G}\lto \mathrm{E}(M/\rad M, N/\rad N).\]
\end{lem}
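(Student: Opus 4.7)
The plan is to show that $F$ and $G$ are each isomorphisms separately, and then take $\gamma = G \circ F$.

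For $F$, the key observation is that any radical map $f \colon M \to N$ automatically vanishes on $\rad M$. Since $M$ is projective over $kQ/J^2$, its radical satisfies $\rad M = JM$, so every element of $\rad M$ is a finite sum of elements of the form $\alpha m_\alpha$ with $\alpha \in Q_1$ and $m_\alpha \in M$. Then $f(\alpha m_\alpha) = \alpha f(m_\alpha) \in J \cdot \rad N \subseteq \rad^2 N$, which is zero by Lemma~\ref{lem:projpro}(1). Hence $f$ factors through $M/\rad M$, and since both $M/\rad M$ and $\rad N$ are annihilated by $J$, the factored map $\bar f \colon M/\rad M \to \rad N$ is a $kQ_0$-module map. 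Conversely, any $kQ_0$-module map $g \colon M/\rad M \to \rad N$ lifts to the radical map $M \twoheadrightarrow M/\rad M \xto{g} \rad N \hookrightarrow N$. These two constructions are manifestly mutually inverse, so $F$ is an isomorphism.

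For $G$, I would decompose $(\rad N)_i$ using Lemma~\ref{lem:projpro}(2) as $\coprod_{\alpha \in Q_1,\, t(\alpha) = i} \Im N_\alpha$, and then invoke Lemma~\ref{lem:projpro}(4) to identify $\Im N_\alpha$ with $(N/\rad N)_{s(\alpha)}$ via the isomorphism induced by $N_\alpha$. Thus as $kQ_0$-modules, $\rad N$ is naturally isomorphic to a coproduct indexed by $\alpha \in Q_1$ of copies of $(N/\rad N)_{s(\alpha)}$ placed at vertex $t(\alpha)$. Consequently a $kQ_0$-module map $g \colon M/\rad M \to \rad N$ corresponds precisely to a collection of $k$-linear maps $f_{\alpha^*} \colon (M/\rad M)_{t(\alpha)} \to (N/\rad N)_{s(\alpha)}$ indexed by $\alpha \in Q_1$, which is by definition an element of $\mathrm{E}(M/\rad M, N/\rad N)$. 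This exhibits $G$ as an isomorphism.

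I do not expect a serious obstacle: the work is essentially done by Lemma~\ref{lem:projpro}. The only mild care required is bookkeeping the indexing by $\alpha^*$ rather than $\alpha$ on the $\mathrm{E}$-side, reflecting the fact that a map landing in $\rad N$ at vertex $t(\alpha)$ records data travelling against the arrow $\alpha$. Once the two isomorphisms are in place, composing them produces the desired $\gamma$.
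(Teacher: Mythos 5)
Your proposal is correct and follows the same route as the paper's own proof: both establish that $F$ is an isomorphism using projectivity of $M$ together with $\rad^2 N = 0$, and both build $G$ from the decomposition of $(\rad N)_i$ in Lemma~\ref{lem:projpro}(2) and the induced isomorphism $\Im N_\alpha \simeq (N/\rad N)_{s(\alpha)}$. The only cosmetic difference is that the paper invokes Lemma~\ref{lem:full} for surjectivity of $F$ and writes $G$, $G^{-1}$ explicitly via the projections $p_\alpha$, inclusions $i_\alpha$, and $\overline{N_\alpha}$, whereas you re-derive the relevant facts directly and describe $G$ more abstractly.
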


\begin{proof}
We identify $\Rad(M, N)$ with $\Hom_{kQ/J^2}(M, \rad N)$.
Then the map $F$ is well defined since $\rad^2(N)$ is zero.
By Lemma ~\ref{lem:full} the map $F$ is  surjective and $\Ker F = 0$.
Then $F$ is an isomorphism.

We recall Lemma ~\ref{lem:projpro}(1)--(4).
Denote by $p_{\alpha}\colon (\rad N)_{t(\alpha)} \to \Im N_{\alpha}$  the  natural projection
and by $i_{\alpha}\colon\Im N_{\alpha} \to (\rad N)_{t(\alpha)}$ the natural inclusion.
Let us denote by $\overline{N_\alpha}\colon N_{s(\alpha)}/\Ker N_\alpha \to \Im N_{\alpha}$
the induced isomorphism of  $N_\alpha$.

Let $g \in \Hom_{kQ_0}(M/\rad M, \rad N)$.
Define $G(g)_{\alpha^*} = (\overline{N_\alpha})^{-1}p_\alpha g_{t(\alpha)}$ for $\alpha \in Q_1$.
Let $h\in \mathrm{E}(M/\rad M, N/\rad N)$.
Define ${G^{-1}}(h)_{i} = \sum_{\alpha} i_{\alpha} \overline{N_{\alpha}}h_{\alpha^*}$ for $i\in Q_0$,
where $\alpha$ runs through all arrows terminating at $i$.
One checks that $G$ and $G^{-1}$ are mutually inverse  isomorphisms.

Therefore, the composite $\gamma =G \circ F$ is an isomorphism.
\end{proof}

Recall the {\em opposite} quiver $Q^\op$ of $Q$.
The underlying graph of $Q^\op$ is the same as $Q$,
but the orientations are all reversed.
We denote by $\alpha^*$ the reversed arrow in $Q^\op$ for each arrow $\alpha$ in $Q$.

If $X, Y$ are $kQ^\op$-modules,
let $\mathrm{E}_0(X, Y)$ be the subspace of $\mathrm{E}(X, Y)$ formed by $(h_{\alpha^*})_{\alpha \in Q_1}$,
where $h_{\alpha^*}= \theta_{s(\alpha)}X_{\alpha^*}- Y_{\alpha^*}\theta_{t(\alpha)}$
for some $kQ_0$-module map $\theta\colon X \to Y$.

We need the following lemmas.

\begin{lem} \label{lem:extension}
For $kQ^\op$-modules $X,Y$,
there is an isomorphism
\[\Ext^1_{kQ^\op}(X, Y) \simeq \mathrm{E}(X, Y)/\mathrm{E}_0(X, Y).\]
\end{lem}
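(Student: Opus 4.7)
The plan is to compute $\Ext^1_{kQ^\op}(X,Y)$ directly from the standard two-term projective resolution of a quiver representation and then identify the resulting cokernel as $\mathrm{E}(X,Y)/\mathrm{E}_0(X,Y)$. Since the path algebra $kQ^\op$ is hereditary, every $kQ^\op$-module $X$ admits the canonical resolution
\[
0\to \bigoplus_{\alpha\in Q_1} kQ^\op e_{s(\alpha)}\otimes_k X_{t(\alpha)} \xto{\partial}
\bigoplus_{i\in Q_0} kQ^\op e_i\otimes_k X_i \xto{\pi} X\to 0,
\]
where $\pi(p\otimes x)=px$ and, on the summand indexed by $\alpha\in Q_1$ (corresponding to the arrow $\alpha^*$ of $Q^\op$), one has $\partial(p\otimes x)=p\alpha^*\otimes x - p\otimes X_{\alpha^*}(x)$; here I am using $s(\alpha^*)=t(\alpha)$ and $t(\alpha^*)=s(\alpha)$ to rewrite the standard form $kQ^\op e_{t(\alpha^*)}\otimes X_{s(\alpha^*)}$.

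Next I would apply $\Hom_{kQ^\op}(-,Y)$. The tensor--hom adjunction $\Hom_{kQ^\op}(kQ^\op e_i\otimes_k V,Y)\simeq\Hom_k(V,Y_i)$ identifies the middle and right-hand terms of the resulting Hom complex with $\prod_{i\in Q_0}\Hom_k(X_i,Y_i)$ and $\prod_{\alpha\in Q_1}\Hom_k(X_{t(\alpha)},Y_{s(\alpha)})$. The first product is exactly $\Hom_{kQ_0}(X,Y)$ and the second is $\mathrm{E}(X,Y)$ by definition. Tracing $\partial$ through the adjunction sends a family $\theta=(\theta_i)$ to the family whose $\alpha^*$-component is $Y_{\alpha^*}\theta_{t(\alpha)}-\theta_{s(\alpha)}X_{\alpha^*}$. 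Replacing $\theta$ by $-\theta$ shows that the image of this induced differential coincides with the subspace of collections $\bigl(\theta_{s(\alpha)}X_{\alpha^*}-Y_{\alpha^*}\theta_{t(\alpha)}\bigr)_{\alpha}$, which is precisely $\mathrm{E}_0(X,Y)$. Since the cokernel computes $\Ext^1_{kQ^\op}(X,Y)$, the required isomorphism follows.

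The only genuinely delicate point is the bookkeeping of signs and source/target conventions, so that the dualized differential is seen to coincide, up to an overall sign that does not affect its image, with the commutator formula defining $\mathrm{E}_0(X,Y)$; the remaining steps are formal consequences of the standard two-term projective resolution available for any representation of a quiver.
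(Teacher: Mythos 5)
Your argument is correct: the paper itself only cites Gabriel--Roiter \cite[7.2]{GR1992} at this point, and your computation of $\Ext^1_{kQ^\op}(X,Y)$ as the cokernel of $\Hom_{kQ_0}(X,Y)\to\mathrm{E}(X,Y)$ via the standard two-term projective resolution of $X$ is precisely the argument underlying that citation, with the source/target bookkeeping ($s(\alpha^*)=t(\alpha)$, $t(\alpha^*)=s(\alpha)$) and the harmless overall sign handled correctly. The only step you assert without proof is the exactness of the standard resolution, which is a well-known fact for path algebras and is reasonable to quote.
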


\begin{proof}
This follows from ~\cite[7.2]{GR1992}.
\end{proof}

\begin{lem}  \label{lem:comhe}
Let $X$ be a $kQ^\op$-module.
Then $X$ is finitely presented if and only if
the functors $\Hom_{kQ^\op}(X,-)$ and $\Ext^1_{kQ^\op}(X,-)$ commute with all small coproducts.
\end{lem}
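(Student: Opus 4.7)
The plan is to establish the two implications separately.

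For the ``only if'' direction: since $kQ$ is a path algebra, it is hereditary, hence so is $kQ^\op$, and any finitely presented $X$ admits a projective resolution $0 \to P_1 \to P_0 \to X \to 0$ with $P_0, P_1$ finitely generated projective. Each $\Hom_{kQ^\op}(P_i, -)$ commutes with coproducts because $P_i$ is a direct summand of a finitely generated free module. Since coproducts are exact in $kQ^\op\-\Mod$, the kernel $\Hom_{kQ^\op}(X, -)$ and cokernel $\Ext^1_{kQ^\op}(X, -)$ of the induced natural transformation $\Hom_{kQ^\op}(P_0, -) \to \Hom_{kQ^\op}(P_1, -)$ also commute with coproducts.

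For the converse I would first show that $X$ is finitely generated. Assuming otherwise, I would construct inductively a strictly ascending chain $X_1 \subsetneq X_2 \subsetneq \cdots$ of finitely generated submodules of $X$. When $\bigcup_n X_n = X$, the map $\Phi\colon X \to \bigoplus_n X/X_n$ sending $x$ to $(x + X_n)_n$ is well-defined, since each $x$ eventually lies in some $X_n$; yet every component $\pi_n\colon X \to X/X_n$ is non-zero, which contradicts the isomorphism $\Hom_{kQ^\op}(X, \bigoplus_n X/X_n) \cong \bigoplus_n \Hom_{kQ^\op}(X, X/X_n)$ supplied by the hypothesis. The main obstacle I anticipate is the non-countably-generated case, where no countable chain exhausts $X$; this should be handled by a transfinite refinement or a reduction to a countably generated submodule.

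Having shown $X$ finitely generated, I would pick a surjection $F \twoheadrightarrow X$ from a finitely generated free module with kernel $K$. Applying $\Hom_{kQ^\op}(-, \bigoplus_\alpha M_\alpha)$ and $\bigoplus_\alpha \Hom_{kQ^\op}(-, M_\alpha)$ to $0 \to K \to F \to X \to 0$ produces two four-term exact sequences (the terms $\Ext^1_{kQ^\op}(F, -)$ vanish since $F$ is projective) linked by natural vertical maps. The first, second, and fourth verticals are isomorphisms, since $\Hom_{kQ^\op}(F, -)$ commutes with coproducts by finite generation of $F$, while $\Hom_{kQ^\op}(X, -)$ and $\Ext^1_{kQ^\op}(X, -)$ do so by hypothesis; the five lemma then forces $\Hom_{kQ^\op}(K, -)$ to commute with coproducts as well. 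Applying the first step to $K$ yields that $K$ is finitely generated, so $X$ is finitely presented.
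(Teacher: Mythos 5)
Your ``only if'' direction is fine, and so is the five-lemma bootstrap at the end; but the converse hinges on the step ``$\Hom_{kQ^\op}(X,-)$ commutes with coproducts $\Rightarrow$ $X$ is finitely generated'', and that step has a genuine hole which your suggested repairs will not fill. The countable-chain argument only shows that $X$ is not the union of a countable strictly ascending chain of proper submodules (i.e.\ $X$ is ``small''/dually slender). Over a general ring, small does \emph{not} imply finitely generated: the standard counterexamples are uncountably generated modules whose finitely generated submodules form a chain of uncountable cofinality, and for such modules no countable chain exhausts them, while transfinite chains of length $\omega_1$ produce no map into a coproduct (an element need not lie in cofinitely many terms), so there is no ``transfinite refinement'' of your argument. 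A ``reduction to a countably generated submodule'' is equally unavailable, because the hypothesis concerns $X$ itself and is not inherited by submodules. Note also that $kQ^\op$ is not Noetherian once $Q$ has oriented cycles (e.g.\ it can be a free algebra), so you cannot appeal to Noetherianity to equate small with finitely generated; and your final sentence ``applying the first step to $K$'' inherits the same gap.

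Any correct completion must actually use the extra structure: either the $\Ext^1$-hypothesis or the hereditariness of $kQ^\op$. The paper itself does exactly this in one line: since $kQ^\op$ is hereditary, $\operatorname{pd}X\le 1$, and then it quotes Strebel's homological finiteness criterion \cite{Str1976}. If you want a self-contained argument, here is how the missing step can be closed. Take $0\to K\to F\to X\to 0$ with $F$ free; $K$ is projective, so by Kaplansky's theorem $K=\bigoplus_{i\in I}K_i$ with each $K_i$ countably generated. By the $\Ext^1$-hypothesis the class of this extension lies in $\bigoplus_i\Ext^1_{kQ^\op}(X,K_i)$, hence has only finitely many nonzero components; pushing out along the projection onto the complementary (cofinite) subsum splits, which exhibits $X$ as a direct summand of a module of the form $F_0/K_S\oplus F_1$ with $F_0/K_S$ countably generated. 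Smallness of $X$ then forces the embedding to land in $F_0/K_S$ plus a finite subsum of $F_1$, so $X$ is countably generated, and now your chain argument does give finite generation. Similarly, in the last step you should use that $K$ is projective (Kaplansky again, plus smallness of $K$ obtained from your five-lemma argument) rather than re-invoking the flawed ``first step''.
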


\begin{proof}
Since the path algebra $kQ^\op$ is hereditary,
the projective dimension of $X$ is no more than one.
Then the statement follows from  ~\cite[1.4 Corollary 2]{Str1976}.
\end{proof}

\section{Construction of the ``Koszul dual functor"}
In this section, for a finite quiver $Q$
we show that taking the top  makes a full and dense functor
from the category of reduced differential projective $kQ/J^2$-modules
to the category of $kQ^\op$-modules.
Here, $Q^\op$ is the opposite of $Q$.

Let $M$ and $N$ be differential projective $kQ/J^2$-modules.
A $kQ/J^2$-module map $f\colon M\to N$ is said to be null homotopic
if there is a $kQ/J^2$-module map $r\colon M \to N$
such that $f =  r d + \delta r$.
Here, $d$ and $\delta$ are the differentials of $M$ and $N$, respectively.
Denote by $\mathrm{Hpt}(M, N)$ the subspace of $\Hom_{kQ/J^2}(M, N)$ formed by null-homotopic maps.

Let $M$ be a differential projective $kQ/J^2$-module.
Recall that $M$ is said to be contractible if the identity map of $M$ is null homotopic,
and  $M$ is said to be reduced if $M$ has no nonzero  contractible direct summands.

We need the following.

\begin{lem} \label{lem:min}
Let $M$ be a differential projective $kQ/J^2$-module.
\begin{enumerate}
  \item $M$ is reduced if and only if the differential  of $M$ is a radical map.
  \item There exists a decomposition $M = M' \coprod M''$  such that $M'$ is contractible and $M''$ is reduced.
  Moreover, this decomposition is unique up to isomorphism.
\end{enumerate}
\end{lem}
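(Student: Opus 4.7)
I would prove both parts by leveraging the top functor $F$ and the Frobenius exact structure on $\Diff(kQ/J^2\-\Proj)$, whose injective-projective objects are exactly the contractible differential modules. For the easy direction of~(1): if $d$ is radical and $C\subseteq M$ is a contractible summand with $1_C=r\,d|_C+d|_C\,r$, then applying $F$ kills both right-hand terms and gives $1_{F(C)}=0$. Since $C$ is a projective $kQ/J^2$-module and $F(C)=C/\rad C$, the equality $C=\rad C$ forces $C=0$ by Nakayama, so $M$ has no nonzero contractible summand.

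For the converse of~(1), I argue contrapositively. Pick $x\in M$ with $d(x)\notin\rad M$. Using $d(\rad M)\subseteq\rad M$, fix a vertex $i$ so that $a:=e_ix$ and $b:=d(a)=e_id(x)$ both lie outside $\rad M$. A short calculation from $d^2(a)=0$ rules out $[b]=\lambda[a]$ in $(M/\rad M)_i$ for any $\lambda\in k$ (any such equation forces $\lambda b\in\rad M$, hence $\lambda=0$, contradicting $[b]\neq 0$), so $[a]$ and $[b]$ are linearly independent. Consequently $\phi\colon P(i)\oplus P(i)\to M$ sending the two generators to $a$ and $b$ is injective on tops, and by Lemma~\ref{lem:full} together with a standard Nakayama argument it is a split monomorphism of $kQ/J^2$-modules. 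Its image $P'$ is closed under $d$ and carries the standard contractible differential, so the split short exact sequence $0\to P'\to M\to M/P'\to 0$ of underlying modules is a conflation in $\Diff(kQ/J^2\-\Proj)$ with $P'$ injective. It therefore splits as differential modules, exhibiting a contractible differential summand of $M$.

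For existence in~(2), I iterate this construction: at each vertex $i$ choose a complement $U_i$ of $\Ker\bigl(F(d)|_{(M/\rad M)_i}\bigr)$, lift a basis of $U_i$ to $\{a_i^k\}\subseteq e_iM$, and set $b_i^k:=d(a_i^k)$. The family $\{[a_i^k],[b_i^k]\}$ is linearly independent in $M/\rad M$, so the assembled map $\bigoplus_{i,k}\bigl(P(i)\oplus P(i)\bigr)\to M$ is a split mono onto a contractible summand $P'$; the corresponding complement $M''$ (chosen $d$-stable via the Frobenius splitting) satisfies $F(d|_{M''})=0$ and is reduced by~(1). For uniqueness, given $M=C_1\oplus R_1=C_2\oplus R_2$, I would first pin down the iso class of each $C_j$ via the intrinsic invariant $u_i(M):=\dim_k e_i\Im F(d_M)$: since $F(d_{R_j})=0$, one has $\Im F(d_M)=\Im F(d_{C_j})$, and any contractible projective differential module over $kQ/J^2$ is isomorphic to $\bigoplus_i\bigl(P(i)\oplus P(i)\bigr)^{u_i}$ with the standard swap differential (split it as $\Im(1-dr)\oplus\Im(dr)$ for any contracting homotopy $r$). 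For $R_1\cong R_2$, I would use the composites $f\colon R_1\hookrightarrow M\twoheadrightarrow R_2$ and $g\colon R_2\hookrightarrow M\twoheadrightarrow R_1$: the difference $gf-1_{R_1}$ factors through the injective object $C_2$ and so is null-homotopic, which makes $F(f)$ and $F(g)$ mutually inverse on tops, and Nakayama then upgrades $f$ to a differential isomorphism.

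The main obstacle, used twice, is the step that converts a submodule $P'\subseteq M$ closed under $d$ and split as an underlying module into a direct summand in $\Diff(kQ/J^2\-\Proj)$. This is exactly where the Frobenius exact structure and the identification of contractibles as injective-projective objects do the work; without this black box, one would have to solve a cocycle-type equation for a splitting homotopy by hand. A secondary subtlety is running Nakayama for possibly infinitely generated projective modules, but this is routine over the semiperfect algebra $kQ/J^2$.
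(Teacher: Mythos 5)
Your proof is correct, but it follows a genuinely different route from the paper. The paper disposes of both statements in one line: it regards $(M,d)$ as a one-periodic complex of projective $kQ/J^2$-modules and cites the dual of the appendix on minimal complexes in \cite{Kra2005}, which already contains the characterization of reduced (= minimal) objects by radical differentials and the existence and uniqueness of the decomposition into a minimal plus a contractible part. You instead give a self-contained construction tailored to $\rad^2=0$: the easy half of (1) by applying the top functor to a contracting homotopy; the converse by lifting an element with $d(x)\notin\rad M$ to an explicit copy of $\Cone(1_{P(i)})$ inside $M$ and splitting it off via the Frobenius structure (degreewise split conflations, contractibles as the projective-injective objects); existence in (2) by lifting a complement $U$ of $\Ker F(d)$ all at once; and uniqueness via the invariant $u_i(M)=\dim_k e_i\Im F(d_M)$ plus the factor-through-injectives argument for the reduced parts. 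The paper's citation buys brevity and generality (the minimal-complex machinery is not special to radical square zero); your argument buys transparency and independence from the graded theory, since every Nakayama-type step is trivialized by $\rad^2=0$. Two assertions you should write out: (i) that the Frobenius complement $M''$ of $P'$ satisfies $F(d|_{M''})=0$ --- this holds because $\Im F(d_M)=F(d)(U)$ lies inside $\top P'$ by construction and $F(d_M)$ is block-diagonal for the differential decomposition $M=P'\oplus M''$, so $\Im F(d|_{M''})\subseteq \top M''\cap \top P'=0$, whence $M''$ is reduced by (1); and (ii) for possibly infinitely generated $M$ the exponents $u_i$ must be read as cardinals, and the decomposition of the projective module $\Im(1-dr)$ into copies of the $P(i)$, as well as your Nakayama steps, are legitimate because $kQ/J^2$ is semiprimary (perfect), not merely semiperfect. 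Neither point is a real obstruction.
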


\begin{proof}
View $M$ as a one-periodic complex
$\cdots \overset{d} \to  M \overset{d} \to   M \overset{d} \to  \cdots$,
where $d$ is the differential.
Then (1) and (2) follow from the dual versions of ~\cite[Appendix ~B]{Kra2005}.
\end{proof}

Let us recall some notations.
We denote by $\Diff(kQ/J^2\-\Proj)$ the category of all differential projective $kQ/J^2$-modules.
It is a Frobenius category and it admits all small products.
Denote by $\Diff_0(kQ/J^2\-\Proj)$ the full subcategory consisting of reduced differential projective $kQ/J^2$-modules.

Recall the homopopy category $\uDiff(kQ/J^2\-\Proj)$ of all differential projective $kQ/J^2$-modules.
The objects are all differential projective $kQ/J^2$-modules.
The morphisms are obtained from differential $kQ/J^2$-module maps by factoring out
the null homotopic maps.

Note that $\Diff_0(kQ/J^2\-\Proj)$ is not extension closed in $\Diff(kQ/J^2\-\Proj)$.
However, the homotopy categories of these two categories are equivalent.

Let $(M, d)$ and $(N, \delta)$ be reduced differential projective $kQ/J^2$-modules.
Then we have inclusions
\[\mathrm{Hpt}(M, N) \subseteq \Rad(M, N) \subseteq {\Diff(kQ/J^2\-\Proj)}(M, N).\]

In fact, since $M$ and $N$ are reduced,
by Lemma ~\ref{lem:min}(1) $d$ and $\delta$ are radical maps.
If $f\colon M \to N$ is radical, then $fd = 0 = \delta f $ and thus $f$ is a differential map.
Then we obtain the   inclusion on the right hand side.
Similarly, the inclusion on the left hand side also holds.

We now prove  the following  key lemma.
Here, we recall the maps $F$ and  $\gamma$  from Lemma ~\ref{lem:gamma}.

\begin{lem} \label{lem:gammakey}
Let $f\colon M \to N$ be a $kQ/J^2$-module map between reduced differential projective $kQ/J^2$-modules.
Then we have
\begin{enumerate}
  \item $f$ is a differential map if and only if $F(f)\gamma(d) =\gamma(\delta)F(f)$;
  \item $f$ is null homotopic if and only if $F(f) =0$ and there exists a $kQ_0$-module map
        $\theta\colon M/\rad M \to N/\rad N$ such that
        $\gamma(f) = \theta\gamma(d) + \gamma(\delta)\theta$.
\end{enumerate}
\end{lem}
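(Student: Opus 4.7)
The plan is to reduce both statements to equalities of \emph{radical} maps, which can then be tested by applying the isomorphism $\gamma$ of Lemma~\ref{lem:gamma}. By Lemma~\ref{lem:min}(1), both $d$ and $\delta$ are radical, so for every $kQ/J^2$-module map $r\colon M\to N$ the compositions $rd$ and $\delta r$ automatically lie in $\Rad(M,N)$. The crux of the proof will be the componentwise identity
\[\gamma(rd)_{\alpha^*} = F(r)_{s(\alpha)}\,\gamma(d)_{\alpha^*},\qquad \gamma(\delta r)_{\alpha^*} = \gamma(\delta)_{\alpha^*}\,F(r)_{t(\alpha)}\]
valid for \emph{any} module map $r$ (not necessarily differential) and every $\alpha\in Q_1$.

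To establish this identity I would unwind $\gamma$ via the recipe in the proof of Lemma~\ref{lem:gamma}. The map $d_{t(\alpha)}$ descends on the left to $\bar d_{t(\alpha)}\colon (M/\rad M)_{t(\alpha)}\to (\rad M)_{t(\alpha)}$, and by Lemma~\ref{lem:projpro}(2) the target decomposes as $\coprod_{t(\beta)=t(\alpha)}\Im M_\beta$. Composing with $r_{t(\alpha)}$ and projecting via $p_\alpha$ onto $\Im N_\alpha$ annihilates every summand with $\beta\neq\alpha$, since the intertwining relation $r_{t(\beta)}M_\beta = N_\beta r_{s(\beta)}$ sends $\Im M_\beta$ into $\Im N_\beta$. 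On the surviving summand, the same relation together with Lemma~\ref{lem:projpro}(3)--(4) yields $r_{t(\alpha)}|_{\Im M_\alpha} = \overline{N_\alpha}\,F(r)_{s(\alpha)}\,(\overline{M_\alpha})^{-1}$, and applying the final $(\overline{N_\alpha})^{-1}$ produces the first formula. The second formula is more direct: $(\delta r)_{t(\alpha)}$ descends on the left to $\bar\delta_{t(\alpha)}\circ F(r)_{t(\alpha)}$, and applying $(\overline{N_\alpha})^{-1}p_\alpha$ gives $\gamma(\delta)_{\alpha^*}\,F(r)_{t(\alpha)}$.

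Given this key identity, both parts follow formally. For (1), $f$ is differential iff $fd - \delta f = 0$ in $\Rad(M,N)$, and by injectivity of $\gamma$ this is equivalent to $F(f)\gamma(d) = \gamma(\delta)F(f)$. For the forward direction of (2), if $f = rd + \delta r$ then $f$ is radical (so $F(f)=0$) and $\gamma(f) = F(r)\gamma(d) + \gamma(\delta)F(r)$; the choice $\theta = F(r)$ works. Conversely, given $F(f) = 0$ and the relation for some $\theta$, I would lift $\theta$ to an $r\colon M\to N$ with $F(r) = \theta$ using the surjectivity in Lemma~\ref{lem:full}; then $\gamma(rd+\delta r) = \theta\gamma(d)+\gamma(\delta)\theta = \gamma(f)$ forces $f = rd + \delta r$ by injectivity of $\gamma$, so $f$ is null homotopic.

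The main obstacle will be the componentwise bookkeeping in the first formula: one has to correctly identify which summand of the decomposition of $(\rad M)_{t(\alpha)}$ survives the projection $p_\alpha$, and verify that the intertwining identity for $r$ translates precisely into the conjugation $r_{t(\alpha)}|_{\Im M_\alpha} = \overline{N_\alpha}\,F(r)_{s(\alpha)}\,(\overline{M_\alpha})^{-1}$ using Lemma~\ref{lem:projpro}(3)--(4). Once this calculation is in place, the rest of the argument is a formal application of the isomorphism $\gamma$.
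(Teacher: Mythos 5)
Your proposal is correct and takes essentially the same route as the paper: reduce both statements to identities between radical maps and apply the isomorphism $\gamma$ of Lemma~\ref{lem:gamma} together with the lifting property of Lemma~\ref{lem:full}. The only difference is that you verify componentwise the key compatibilities $\gamma(rd)_{\alpha^*}=F(r)_{s(\alpha)}\gamma(d)_{\alpha^*}$ and $\gamma(\delta r)_{\alpha^*}=\gamma(\delta)_{\alpha^*}F(r)_{t(\alpha)}$, which the paper merely asserts, and your verification is accurate.
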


\begin{proof}
(1) Since  the map $\gamma$ is an isomorphism by Lemma ~\ref{lem:gamma},
we infer that $fd =\delta f$ if and only if $\gamma(fd)=\gamma(\delta f)$.
Note that $\gamma(fd)=F(f)\gamma(d)$  and $\gamma(\delta f)=\gamma(\delta)F(f)$.
Then $f$ is a differential map if and only if $F(f)\gamma(d) =\gamma(\delta)F(f)$.

(2) $``\Longrightarrow"$ Since $f$ is null homotopic,
there is a $kQ/J^2$-module map $r\colon M   \to N$ such that $f=rd+\delta r$.
Then $F(f)= 0$ and $\gamma(f) = F(r)\gamma(d) + \delta F(r)$.

$``\Longleftarrow"$
Since $M$ is projective, there is a $kQ/J^2$-module map $r\colon M   \to N$ such that $\theta=F(r)$.
Note that $f$ is radical and $\gamma(f)=\gamma(rd+\delta r)$.
Then $f=rd+\delta r$  by Lemma ~\ref{lem:gamma}.
We infer that $f$ is null homotopic.
\end{proof}

We now construct the ``Koszul dual functor" $F$ from the category
of reduced  differential projective $kQ/J^2$-modules
to  the category of  $kQ^\op$-modules.

For any object $(M, d)$ in $\Diff_0(kQ/J^2\-\Proj)$,
set $F(M,d)_i = (M/\rad M)_i$ for  $i \in Q_0$
and $F(M,d)_{\alpha^*} = \gamma(d)_{\alpha^*}$ for  $\alpha \in Q_1$.
Then $F(M,d)$ is a $kQ^\op$-module.

For a morphism $f$ in $\Diff_0(kQ/J^2\-\Proj)$,
recall that $F(f)$  is a $kQ_0$-module map.
It follows from Lemma ~\ref{lem:gammakey}(1) that $F(f)$ is  a $kQ^\op$-module map.

Let $\sigma$ be the algebra isomorphism
of $kQ^\op$ induced by $\sigma(q) = (-1)^lq$
for every path $q$ in  $Q^\op$, where $l$ is the length of $q$.
Note that $\sigma^2$ is the identity map.

For a $kQ^\op$-module $X$,
let $\presuper{\sigma}X$ be the {\em twisted module} of $X$.
Here, $\presuper{\sigma}X$ is equal to $X$ as $k$-vector spaces,
and the action $\circ$ is given by $w \circ x = \sigma(w)x$ for $w \in kQ^\op, x\in X$.

We see that $\presuper{\sigma}{(\presuper{\sigma} X)}$ is the same as $X$.
However, the twisted module $\presuper{\sigma} X$ and the original module $X$  need not be isomorphic.
The following is an example.

\begin{exm}
Let $k$ be field and $Q$ be the following quiver.
\[ \xymatrix{&   {}_\bullet^2\ar[dr]^\beta  \\
\mbox{\scriptsize{1 $\bullet$} }\ar[rr]_\gamma\ar[ur]^\alpha &  & \mbox{\scriptsize{$\bullet$ 3}}
  } \]

Let $X$ be the $kQ$-module with $X_1=X_2=X_3=k$, $X_{\alpha}=X_{\beta}=X_{\gamma}=1_k$,
where $1_k$ is the identity map.
Then $\presuper\sigma{X}_1= \presuper\sigma{X}_2= \presuper\sigma{X}_3=k$,
$\presuper\sigma{X}_{\alpha}= \presuper\sigma{X}_{\beta}= \presuper\sigma{X}_{\gamma}=-1_k$.

If the characteristic of  $k$ is not equal to $2$,
then the two $kQ$-modules $X$ and $\presuper\sigma X$ are not isomorphic.
\end{exm}

Let $M$ be a reduced differential projective $kQ/J^2$-module.
The shift $\Sigma(M)$ of $M$ is the equal to $M$ as $kQ/J^2$-modules, while
the differential of $\Sigma(M)$ is the negative of the differential of $M$.
Observe that the $kQ^\op$-modules $\presuper{\sigma}F(M)$  and $F\Sigma (M)$ are isomorphic.

We have the following.

\begin{lem} \label{lem:rad-ext}
For any $M, N$  in $\Diff_0(kQ/J^2\-\Proj)$,
there is an isomorphism
\[\uRad(M, N) \simeq \Ext^1_{kQ^\op}(F(M),F\Sigma (N)).\]
Here, we write $\uRad(M, N) = \Rad(M, N)/\mathrm{Htp}(M, N)$.
\end{lem}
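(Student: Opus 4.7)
The plan is to obtain the isomorphism by chaining together the identifications already set up in Lemma~\ref{lem:gamma}, Lemma~\ref{lem:gammakey}(2), and Lemma~\ref{lem:extension}, with the sign in $\Sigma$ playing the role of the sign in $\mathrm{E}_0$.

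First I would recall that by Lemma~\ref{lem:gamma} the map $\gamma$ restricts to a $k$-linear isomorphism
\[\gamma\colon \Rad(M, N)\;\xrightarrow{\sim}\;\mathrm{E}(M/\rad M,\, N/\rad N) \;=\; \mathrm{E}(F(M), F(N)).\]
Since the vector space $\mathrm{E}(X, Y)$ is defined purely in terms of the underlying $kQ_0$-module structures, we may rewrite the target as $\mathrm{E}(F(M), F\Sigma(N))$ without change (the module structure will only matter when we pass to $\mathrm{E}_0$). So $\gamma$ identifies $\Rad(M,N)$ with $\mathrm{E}(F(M), F\Sigma(N))$ as $k$-vector spaces.

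Next I would verify that under this identification the subspace $\mathrm{Hpt}(M,N)$ of null-homotopic maps corresponds to $\mathrm{E}_0(F(M), F\Sigma(N))$. Let $f\in\Rad(M,N)$; in particular $F(f)=0$. By Lemma~\ref{lem:gammakey}(2), $f$ is null-homotopic iff there is a $kQ_0$-module map $\theta\colon F(M)\to F(N)$ with $\gamma(f)=\theta\gamma(d)+\gamma(\delta)\theta$. Unwinding the indices and using that $\gamma(d)_{\alpha^*}=F(M)_{\alpha^*}$ and $\gamma(\delta)_{\alpha^*}=F(N)_{\alpha^*}$, this condition reads
\[\gamma(f)_{\alpha^*} \;=\; \theta_{s(\alpha)}\, F(M)_{\alpha^*} \;+\; F(N)_{\alpha^*}\,\theta_{t(\alpha)} \qquad (\alpha\in Q_1).\]
On the other hand, since $F\Sigma(N)\simeq \presuper{\sigma}F(N)$ and $\sigma$ acts by $-1$ on each arrow, we have $F\Sigma(N)_{\alpha^*}=-F(N)_{\alpha^*}$. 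Thus an element $h\in\mathrm{E}_0(F(M), F\Sigma(N))$ is precisely one of the form
\[h_{\alpha^*}\;=\;\theta_{s(\alpha)}\,F(M)_{\alpha^*} \;-\; F\Sigma(N)_{\alpha^*}\,\theta_{t(\alpha)}\;=\;\theta_{s(\alpha)}\,F(M)_{\alpha^*}\;+\;F(N)_{\alpha^*}\,\theta_{t(\alpha)},\]
matching the homotopy formula exactly. This gives $\gamma(\mathrm{Hpt}(M,N))=\mathrm{E}_0(F(M),F\Sigma(N))$.

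Passing to quotients and applying Lemma~\ref{lem:extension} to $X=F(M)$ and $Y=F\Sigma(N)$ then yields
\[\uRad(M,N)\;=\;\Rad(M,N)/\mathrm{Hpt}(M,N)\;\simeq\;\mathrm{E}(F(M),F\Sigma(N))/\mathrm{E}_0(F(M),F\Sigma(N))\;\simeq\;\Ext^1_{kQ^\op}(F(M),F\Sigma(N)).\]
The one subtle point, which is the only place something nontrivial happens, is the sign bookkeeping: the minus sign coming from the shift $\Sigma$ (equivalently, from the twist $\sigma$) is exactly what converts the $+$ appearing in the homotopy relation into the $-$ appearing in the definition of $\mathrm{E}_0$. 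Once this sign match is noted, the rest of the argument is a direct composition of previously established isomorphisms.
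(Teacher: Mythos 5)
Your proof is correct and follows the same route as the paper: identify $\Rad(M,N)$ with $\mathrm{E}(F(M),F\Sigma(N))$ via Lemma~\ref{lem:gamma}, show $\gamma(\mathrm{Hpt}(M,N))=\mathrm{E}_0(F(M),F\Sigma(N))$ via Lemma~\ref{lem:gammakey}(2), and conclude with Lemma~\ref{lem:extension}. The only difference is that you spell out the sign bookkeeping (the shift $\Sigma$ turning the $+$ in the homotopy relation into the $-$ in $\mathrm{E}_0$), which the paper leaves implicit; your computation of it is accurate.
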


\begin{proof}
It follows from Lemma ~\ref{lem:gamma} that there is an isomorphism
\[\gamma\colon \Rad(M, N)\overset{\sim}\lto \mathrm{E}(F(M), F(N))= \mathrm{E}(F(M), F\Sigma (N)).\]
The image of $\mathrm{Hpt}(M,N)$ under $\gamma$ is $\mathrm{E}_0(F(M),F\Sigma (N))$
by Lemma ~\ref{lem:gammakey}(2).
We infer from Lemma ~\ref{lem:extension} that $\uRad(M, N)$ and  $\Ext^1_{kQ^\op}(F(M),F\Sigma (N))$ are isomorphic.
\end{proof}

The following is the main result of this section.

\begin{thm} \label{thm:repiso}
Let $k$ be a field and $Q$ be a finite quiver.
Then taking the top makes a functor
$F\colon \Diff_0(kQ/J^2\-\Proj) \to kQ^\op\-\Mod$ from the category of reduced differential projective $kQ/J^2$-modules
to the category of $kQ^\op$-modules.
Moreover,
\begin{enumerate}
  \item $F$ is full, dense, and detects the isomorphisms;
  \item $F$ is exact and commutes with all small coproducts;
  \item $F$ vanishes on all null-homotopic maps;
  \item For any $M, N$ in $\Diff_0(kQ/J^2\-\Proj)$, there is an isomorphism
  \[{\uDiff(kQ/J^2\-\Proj)}(M, N) \simeq
  \Hom_{kQ^\op}(F(M), F(N)) \coprod \Ext^1_{kQ^\op}(F(M), F\Sigma (N)).\]
\end{enumerate}
\end{thm}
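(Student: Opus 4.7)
The plan is to leverage Lemmas~\ref{lem:full}, \ref{lem:gamma}, \ref{lem:gammakey} and \ref{lem:rad-ext} in sequence. Well-definedness of $F$ on objects has already been verified in the preamble, and for a differential morphism $f$ the equality $F(f)\gamma(d)=\gamma(\delta)F(f)$ from Lemma~\ref{lem:gammakey}(1) says exactly that $F(f)$ commutes with the $Q^\op$-actions $\gamma(d)$ and $\gamma(\delta)$, so $F$ is indeed a functor into $kQ^\op\-\Mod$.

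For density, given a $kQ^\op$-module $X$, I take $M=\coprod_{i\in Q_0} P_i^{(X_i)}$ with $P_i$ the indecomposable projective $kQ/J^2$-module at vertex $i$, so $M/\rad M\cong X$ as $kQ_0$-modules; then Lemma~\ref{lem:gamma} produces a radical endomorphism $d$ of $M$ with $\gamma(d)$ matching the $Q^\op$-action on $X$. Because $\rad^2 M=0$ by Lemma~\ref{lem:projpro}(1), any radical endomorphism squares to zero, and Lemma~\ref{lem:min}(1) shows that $(M,d)$ is reduced; by construction $F(M,d)=X$. For fullness, any $kQ^\op$-module map $g\colon F(M)\to F(N)$ lifts, by the surjectivity in Lemma~\ref{lem:full}, to some $kQ/J^2$-module map $f\colon M\to N$ with $F(f)=g$, and the intertwining property of $g$ combined with Lemma~\ref{lem:gammakey}(1) upgrades $f$ to a differential map. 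Vanishing on null-homotopic maps is the forward direction of Lemma~\ref{lem:gammakey}(2). To detect isomorphisms, if $F(f)$ is iso then $f$ is surjective by Nakayama; since $M$ and $N$ are both projective covers of their tops, $f$ is an iso of $kQ/J^2$-modules, and $f^{-1}$ is automatically differential because $fd=\delta f$ gives $df^{-1}=f^{-1}\delta$.

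For statement (4), the strategy is to restrict the short exact sequence of Lemma~\ref{lem:full} to differential morphisms. Because $M,N$ are reduced, the chain of inclusions $\mathrm{Hpt}(M,N)\subseteq\Rad(M,N)\subseteq\Diff(kQ/J^2\-\Proj)(M,N)$ discussed before Lemma~\ref{lem:gammakey} allows me to extract
\[0\to\Rad(M,N)\to \Diff(kQ/J^2\-\Proj)(M,N)\xto{F}\Hom_{kQ^\op}(F(M),F(N))\to 0,\]
where the right-hand surjectivity is fullness and the codomain is correct by Lemma~\ref{lem:gammakey}(1). Quotienting by $\mathrm{Hpt}(M,N)$ and using Lemma~\ref{lem:rad-ext} to replace $\uRad(M,N)$ by $\Ext^1_{kQ^\op}(F(M),F\Sigma(N))$ yields a short exact sequence of $k$-vector spaces whose (non-canonical) splitting is the asserted direct-sum decomposition. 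Exactness of $F$ reduces to the fact that conflations in the Frobenius exact category $\Diff(kQ/J^2\-\Proj)$ are split as $kQ/J^2$-module sequences, so taking tops preserves them; compatibility with arbitrary coproducts is immediate because $M\mapsto M/\rad M$ and the formula for $\gamma(d)$ are both defined componentwise. The main obstacle, in my view, is the bookkeeping in (4): one must cleanly isolate the subspace of differential maps inside Lemma~\ref{lem:full}'s sequence and correctly identify the $\Ext^1$-summand as the contribution of the differential radical morphisms that remain after killing homotopies, while the $\Hom$-summand is the image of the non-radical differential morphisms.
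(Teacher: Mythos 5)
Your proposal is correct and follows essentially the same route as the paper: fullness and functoriality via Lemma~\ref{lem:full} and Lemma~\ref{lem:gammakey}(1), density by building a reduced differential structure on a projective module with prescribed top (your $\gamma^{-1}$ construction is the paper's explicit $G(X)=kQ/J^2\otimes_{kQ_0}X$), and part (4) by restricting the sequence of Lemma~\ref{lem:full} to differential maps, quotienting by homotopies, splitting over $k$ and invoking Lemma~\ref{lem:rad-ext}. Your Nakayama/projective-cover argument for detecting isomorphisms is a harmless local variant of the paper's lifting of the inverse combined with $\rad^2=0$.
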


\begin{proof}
$(1)$  Let $M$ and $N$ be in $\Diff_0(kQ/J^2\-\Proj)$
and let $g\colon F(M) \to F(N)$  be a $kQ^\op$-module map.
By Lemma ~\ref{lem:full} there is a $kQ/J^2$-module map
$f\colon M \to N$ such that $g = F(f)$.
Since $g$ is a $kQ^\op$-module map,
it follows from Lemma ~\ref{lem:gammakey}(1) that $f$ is a differential map .
This shows that the functor $F$ is full.

For a $kQ^\op$-module $X$, set $G(X)$ be the $kQ/J^2$-module
$kQ/J^2\otimes_{kQ_0}X$ with a differential $d$ given by
\[  d( \sum_{i\in Q_0}y_i\otimes x_i)
   =\sum_{\alpha \in Q_1}y_{t(\alpha)}\alpha\otimes X_{\alpha^*}(x_{t(\alpha)})\]
for $y_i\in (kQ/J^2)e_i$ and $x_i\in X_i$.
Here, we recall the target $t(\alpha)$ of the arrow $\alpha$.

Note that $G(X)$ is a reduced differential projective $kQ/J^2$-module
and $F(G(X))$ is isomorphic to $X$.
It follows that the functor $F$ is dense.

It remains to show that $F$ detects the isomorphisms.
Suppose that $f\colon M \to N$ is a morphism in
$\Diff_0(kQ/J^2\-\Proj)$  with $F(f)$ being an isomorphism.

Let $g$ be the inverse of $F(f)$.
Since $N$ is projective, there exists a morphism $h\colon N\to M$ such that $F(h)= g$.
Then $F(1_M) = F(hf)$, $F(1_N) = F(fh)$,
where $1_M$ and $1_N$ are the identity maps.
By Lemma ~\ref{lem:full} both
$1_M - hf$ and  $1_N - fh$ are radical maps.
Since $\rad^2(M) = 0$, $\rad^2(N)=0$,
we have $(1_M - hf)^2 = 0$, $(1_N -fh)^2 = 0$.
Then $hf$ and $fh$ are isomorphisms.
It follows that $f$ is an isomorphism.

(2) For every  reduced differential projective $kQ/J^2$-module $M$,  recall that
$F(M)$ is isomorphic to $kQ_0\otimes_{kQ/J^2}M$ as $k$-vector spaces.
It follows that  $F$ is exact and commutes with all small coproducts.

(3) Since every null-homotopic map is radical and $F$ vanishes on all radical maps,
it follows that $F$ vanishes on all null-homotopic maps.

(4)  Let  $M$ and $N$  be  in $\Diff_0(kQ/J^2\-\Proj)$.
Since the functor $F$ is full by (1),
there is a short exact sequence
\[0 \to \Rad(M, N)  \to \Hom_{\Diff(kQ/J^2\-\Proj)}(M, N) \to \Hom_{kQ^\op}(F(M), F(N))\to 0.\]
Since $\mathrm{Hpt}(M, N) \subseteq \Rad(M, N)$, it yields a short exact sequence
\[0 \to \uRad(M, N)  \to \Hom_{\uDiff(kQ/J^2\-\Proj)}(M, N) \to \Hom_{kQ^\op}(F(M), F(N))\to 0.\]
Since $k$ is a field, the previous exact sequence is split.
Then the desired isomorphism follows from Lemma ~\ref{lem:rad-ext}.
\end{proof}

By Theorem ~\ref{thm:repiso} we have the following; compare \cite[Corollary 4.10]{Wei2015}.

\begin{cor} \label{cor:bije}
The  functor $F$ gives a bijection from the isoclasses of objects in the homotopy category of
differential projective $kQ/J^2$-modules
to the isoclasses of objects in the category of $kQ^\op$-modules,
which carries indecomposable objects to indecomposable objects.
\end{cor}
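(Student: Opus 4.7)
The strategy is to reduce to reduced differential projective modules and then transport the bijection across $F$ using Theorem~\ref{thm:repiso}.

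First, by Lemma~\ref{lem:min}(2) every object of $\uDiff(kQ/J^2\-\Proj)$ splits as a contractible part and a reduced part, with the reduced part unique up to isomorphism. Since contractible modules vanish in the homotopy category, the isoclasses in $\uDiff(kQ/J^2\-\Proj)$ are in bijection with equivalence classes of reduced differential projective modules, where two such are identified when they are isomorphic in $\uDiff(kQ/J^2\-\Proj)$. I would then observe that, on reduced modules, this relation coincides with honest isomorphism in $\Diff_0(kQ/J^2\-\Proj)$: if $[f]\colon M \to N$ is a homotopy isomorphism with homotopy inverse $[g]$, then $F$ kills null-homotopic maps by Theorem~\ref{thm:repiso}(3), so $F(gf) = F(1_M)$ and $F(fg) = F(1_N)$; hence $F(f)$ is an isomorphism, and so is $f$ itself by the ``detects isomorphisms'' clause of Theorem~\ref{thm:repiso}(1).

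Second, I would deduce the bijection on isoclasses directly. Surjectivity is the density statement in Theorem~\ref{thm:repiso}(1): every $kQ^\op$-module has the form $F(M)$ for some reduced $M$. For injectivity, given an isomorphism $F(M) \simeq F(N)$ with $M, N$ reduced, the fullness of $F$ lifts it to a map $f\colon M \to N$ in $\Diff_0(kQ/J^2\-\Proj)$, and ``detects isomorphisms'' promotes $f$ to an isomorphism, which in particular gives $M \simeq N$ in $\uDiff(kQ/J^2\-\Proj)$.

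Third, for the preservation of indecomposability I would use that $F$ commutes with coproducts by Theorem~\ref{thm:repiso}(2). Given a nontrivial decomposition $M \simeq M_1 \oplus M_2$ of a reduced $M$, the uniqueness in Lemma~\ref{lem:min}(2) forces each $M_i$ to be reduced, and each nonzero $M_i$ has nonzero top, since a projective $kQ/J^2$-module is a coproduct of indecomposable projectives $(kQ/J^2)e_j$, each with nonzero top. Hence $F(M) \simeq F(M_1) \oplus F(M_2)$ is a nontrivial decomposition in $kQ^\op\-\Mod$. Conversely, if $F(M) \simeq X_1 \oplus X_2$ with both $X_i$ nonzero, density produces reduced $M_i$ with $F(M_i) \simeq X_i$; then $F(M_1 \oplus M_2) \simeq F(M)$, and the injectivity established above yields $M \simeq M_1 \oplus M_2$ in $\uDiff(kQ/J^2\-\Proj)$, a nontrivial decomposition.

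The main obstacle will be cleanly separating the two levels---$\Diff_0(kQ/J^2\-\Proj)$ and its homotopy category---when formulating the bijection and the notion of decomposability, since contractible summands exist on the module level yet disappear in $\uDiff(kQ/J^2\-\Proj)$. Once the first paragraph is in place and reduced representatives can be chosen throughout, the remaining steps are formal consequences of fullness, density, exactness, and the vanishing on null-homotopic maps packaged in Theorem~\ref{thm:repiso}.
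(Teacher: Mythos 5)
Your proposal is correct and follows essentially the same route as the paper, which deduces the corollary directly from Theorem~\ref{thm:repiso} (fullness, density, detection of isomorphisms, vanishing on null-homotopic maps) together with the decomposition of Lemma~\ref{lem:min}(2). Your extra details---that homotopy isomorphisms between reduced objects are honest isomorphisms, and the two-sided check of indecomposability via nonzero tops of nonzero projectives---are exactly the implicit steps the paper leaves to the reader.
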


\section{Compact generator}
Let $k$ be a field and $Q$ be a finite quiver.
Recall the opposite quiver $Q^\op$ and
the radical square algebra $kQ/J^2$ of $Q$.

Let $C$ be the $kQ/J^2$-module $kQ/J^2 \otimes_{kQ_0} kQ^\op$
with a differential $d$ given by
\[ d (y\otimes z)=\sum_{\alpha\in Q_1} y\alpha\otimes \alpha^*z\]
for $y\in  kQ/J^2$ and  $z\in kQ^\op$.
Here, we recall that $\alpha^*$ is the reversed arrow of $\alpha$.

Observe that $C$ is a reduced differential projective $kQ/J^2$-module.
Recall the functor $F$ from the previous section.
It is routine  to show that
$F(C)$ is isomorphic to the regular module over $kQ^\op$.

Recall the homotopy category $\uDiff(kQ/J^2\-\Proj)$ of differential projective $kQ/J^2$-module.
We will later show $C$ is a compact generator for this triangulated category.

Let $M$ be a differential projective $kQ/J^2$-module.
By Lemma ~\ref{lem:min}(2) there exists a decomposition $M = M'\coprod M''$
such that $M'$ is contractible and $M''$ is reduced.
Recall that the {\em cohomology group} $H(M)$ of $M$ is $\Ker d/\Im d$,
where $d$ is differential of $M$.
We also recall that $M$ is said to be exact if its cohomology group is zero.
Note that $H(M')=0$ and $H(M)=H(M'')$.

\begin{lem} \label{lem:fiso}
Let $M$ be a differential projective $kQ/J^2$-module. Then we have
\begin{enumerate}
  \item ${\uDiff(kQ/J^2\-\Proj)}(C,M) \simeq F(M'')$;
  \item $H(M)  \simeq  \Hom_{kQ^\op}(kQ_0, F(M'')) \coprod \Ext^1_{kQ^\op}(kQ_0, F(M''))$.
\end{enumerate}
\end{lem}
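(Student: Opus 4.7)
The strategy is to reduce both parts to the isomorphism of Theorem~\ref{thm:repiso}(4), using $C$ as the test object for (1) and the auxiliary reduced module $D := (kQ/J^2, 0)$ for (2). In both cases the contractible summand $M'$ is killed in the homotopy category (by Lemma~\ref{lem:min}(2)), and also in cohomology (contractible modules are exact), so the question reduces to $M''$.

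For (1), it suffices to check that $F(C)$ is the regular left $kQ^\op$-module; this was already noted in the text and is verified by unwinding $C = kQ/J^2 \otimes_{kQ_0} kQ^\op$ and the formula for $\gamma(d)$ against the differential $d(y \otimes z) = \sum_\alpha y\alpha \otimes \alpha^* z$, whose very definition is designed so that the prescribed action of each $\alpha^* \in Q^\op$ recovers left multiplication by $\alpha^*$. Since the regular module is projective in $kQ^\op\-\Mod$, applying Theorem~\ref{thm:repiso}(4) to $(C, M'')$ makes the Ext-summand vanish and identifies the Hom-summand with $F(M'')$, yielding
\[{\uDiff(kQ/J^2\-\Proj)}(C, M) \cong {\uDiff(kQ/J^2\-\Proj)}(C, M'') \cong F(M'').\]

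For (2), I first verify $D \in \Diff_0(kQ/J^2\-\Proj)$: it is projective, and its zero differential is (trivially) radical, so $D$ is reduced by Lemma~\ref{lem:min}(1); moreover $F(D) \cong kQ_0$ with zero $kQ^\op$-action since $\gamma(0) = 0$. The central computation is ${\uDiff(kQ/J^2\-\Proj)}(D, M) \cong H(M)$: under the natural identification $\Hom_{kQ/J^2}(kQ/J^2, M) \cong M$, the differential condition $f \circ 0 = d_M \circ f$ collapses to $d_M \circ f = 0$, cutting the Hom-set down to $\Ker d_M$, while the homotopy condition reduces to $f = d_M \circ r$, selecting $\Im d_M$. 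Combining this with $H(M) = H(M'')$ and Theorem~\ref{thm:repiso}(4) applied to $(D, M'')$ gives
\[H(M) \cong \Hom_{kQ^\op}(kQ_0, F(M'')) \oplus \Ext^1_{kQ^\op}(kQ_0, F\Sigma M'').\]
Finally, the sign twist $\sigma$ is a self-equivalence of $kQ^\op\-\Mod$, and $kQ_0$ is fixed by it (its arrow action is already zero), so $\Ext^1_{kQ^\op}(kQ_0, F\Sigma M'') \cong \Ext^1_{kQ^\op}(kQ_0, F(M''))$, completing (2).

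The main technical point is the identification $F(C) \cong kQ^\op$ as a $kQ^\op$-module, which is the whole reason the differential of $C$ was defined as it was; everything else is a direct bookkeeping of definitions, with the zero differential on $D$ degenerating the differential and homotopy equations into the very definitions of cycles and boundaries.
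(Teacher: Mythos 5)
Your proposal is correct and follows essentially the same route as the paper: identify $H(M)$ with $\uDiff(kQ/J^2\-\Proj)\bigl((kQ/J^2,0),M\bigr)$, kill the contractible summand $M'$, and apply Theorem~\ref{thm:repiso}(4) with test objects $C$ (whose image under $F$ is the regular, hence projective, $kQ^\op$-module) and $(kQ/J^2,0)$ (whose image is $kQ_0$). Your extra care with the twist $\sigma$ fixing $kQ_0$, which the paper leaves implicit, is a welcome detail but not a deviation in method.
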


\begin{proof}
Note that $H(M)$ is isomorphic to $\uDiff(kQ/J^2\-\Proj)(kQ/J^2, M)$.
Here we denote by $kQ/J^2$ the differential module $kQ/J^2$ with vanishing differential.
Then (1) and (2)  follow from Theorem ~\ref{thm:repiso}(4).
\end{proof}

The ``Koszul dual functor" $F$ has a good restriction on some full subcategories.
More precisely, we have the following result.

\begin{prop} \label{prop:restric}
Let $M$ be a reduced differential projective $kQ/J^2$-module in the stable category $\uDiff(kQ/J^2\-\Proj)$.
\begin{enumerate}
  \item $M$ is finite dimensional if and only if $F(M)$ is  finite dimensional.
  \item $M$ is compact  if and only if $F(M)$ is finitely presented.
  \item $M$ is exact if and only if $\Ext^n_{kQ^\op}(kQ_0, F(M))=0$ for $n=0,1$.
\end{enumerate}
\end{prop}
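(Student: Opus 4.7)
My plan is to treat each part of the proposition separately. For (1), I would use the explicit structure of projective $kQ/J^2$-modules: such an $M$ decomposes as $\bigoplus_{i\in Q_0} P_i^{(I_i)}$ with $P_i=(kQ/J^2)e_i$ finite dimensional (since $Q$ is a finite quiver), while $F(M)\simeq\bigoplus_i S_i^{(I_i)}$ for $S_i=P_i/\rad P_i$. Since $|Q_0|<\infty$, both $\dim_k F(M)=\sum_i |I_i|$ and $\dim_k M=\sum_i |I_i|\dim_k P_i$ are finite iff every $|I_i|$ is. For (3), the hypothesis that $M$ is reduced forces $M''=M$ in the decomposition of Lemma~\ref{lem:min}(2), so Lemma~\ref{lem:fiso}(2) specialises to $H(M)\simeq \Hom_{kQ^\op}(kQ_0,F(M))\oplus \Ext^1_{kQ^\op}(kQ_0,F(M))$. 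Exactness of $M$ means $H(M)=0$, which is equivalent to the vanishing of both summands, i.e.\ $\Ext^n_{kQ^\op}(kQ_0,F(M))=0$ for $n=0,1$.

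The substantive part is (2). My plan is to fix a small family $\{N_j\}$ in $\Diff_0(kQ/J^2\-\Proj)$ and compare the canonical map
\[\coprod_j \uDiff(kQ/J^2\-\Proj)(M,N_j) \lto \uDiff(kQ/J^2\-\Proj)\bigl(M,{\textstyle\coprod_j} N_j\bigr)\]
against the splitting in Theorem~\ref{thm:repiso}(4). Since $F$ and $\Sigma$ commute with coproducts (Theorem~\ref{thm:repiso}(2)) and the isomorphism of Theorem~\ref{thm:repiso}(4) is natural in $N$, this comparison map decomposes as the direct sum of the canonical coproduct-comparison maps for $\Hom_{kQ^\op}(FM,-)$ applied to $\{FN_j\}$ and for $\Ext^1_{kQ^\op}(FM,-)$ applied to $\{F\Sigma N_j\}$. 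A direct sum of morphisms is an isomorphism iff each summand is, so $M$ is compact iff both natural maps are isomorphisms for every family. Density of $F$ together with the fact that $F\Sigma\simeq \presuper{\sigma}F$ is an auto-equivalence of $kQ^\op\-\Mod$ ensure that these families range over all families of $kQ^\op$-modules, so compactness of $M$ is equivalent to both $\Hom_{kQ^\op}(FM,-)$ and $\Ext^1_{kQ^\op}(FM,-)$ commuting with small coproducts. By Lemma~\ref{lem:comhe} this characterises $FM$ as finitely presented.

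The main obstacle is the block-diagonal argument in~(2): one must verify that the splitting of Theorem~\ref{thm:repiso}(4) is natural in $N$ and that its two summands are carried to the canonical coproduct-comparison maps on each factor separately. This follows by unwinding the construction of the isomorphism via the map $\gamma$ of Lemma~\ref{lem:gamma} and the description of null-homotopic maps in Lemma~\ref{lem:gammakey}(2); both ingredients were set up functorially in $N$ in Section~4, so the required naturality is intrinsic to the construction.
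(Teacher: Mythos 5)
Your proposal follows essentially the same route as the paper's proof: part (1) via $F(M)$ being the top of the projective module $M$, part (3) via Lemma~\ref{lem:fiso}(2) with $M''=M$, and part (2) via density of $F$, Theorem~\ref{thm:repiso}(2) and (4), and Lemma~\ref{lem:comhe} (the paper additionally passes through Lemma~\ref{lem:min}(2) to replace an arbitrary family by its reduced parts, which you should do as well for the ``if'' direction, since compactness is tested against all objects of the homotopy category). The only caveat is the step you flag yourself: the splitting in Theorem~\ref{thm:repiso}(4) is a non-canonical $k$-linear splitting and is not shown to be natural in $N$, so the coproduct-comparison map need not be literally block-diagonal. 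What is natural is the short exact sequence
$0 \to \uRad(M,-) \to {\uDiff(kQ/J^2\-\Proj)}(M,-) \to \Hom_{kQ^\op}(F(M),F(-)) \to 0$
together with the isomorphism of Lemma~\ref{lem:rad-ext}, and this suffices: since all three canonical comparison maps are monomorphisms, a map of short exact sequences that is an isomorphism in the middle is an isomorphism on the ends (snake lemma), and conversely by the five lemma; this is exactly the compatibility the paper's own terse proof of (2) uses implicitly, so your argument is correct once phrased in terms of the exact sequence rather than a chosen splitting.
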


\begin{proof}
(1) Since $F(M)$ is the top of the projective module $M$,
it follows that $M$ is finitely generated if and only if $F(M)$ is finitely generated.
Since  finitely generated $kQ/J^2$-modules are exactly  finite-dimensional  $kQ/J^2$-modules,
we infer that $M$ is finite dimensional if and only if $F(M)$ is  finite dimensional.

(2) $``\Longrightarrow"$
Assume that $M$ is compact.
Let $\{Y_\lambda\}_{\lambda\in L}$ be a set of $kQ^\op$-modules.
Since the functor $F$ is dense  by Theorem ~\ref{thm:repiso}(1),
every $Y_\lambda$ is isomorphic to $F(T_\lambda)$ for some
reduced differential projective $kQ/J^2$-module $T_\lambda$.

By Theorem ~\ref{thm:repiso}(2) and (4) there are isomorphisms
\[\Hom_{kQ^\op}(F(M), \coprod_{\lambda\in L} Y_\lambda) \simeq
\coprod_{\lambda\in L} \Hom_{kQ^\op}(F(M), Y_\lambda),\]
and
\[\Ext^1_{kQ^\op}(F(M), \coprod_{\lambda\in L} \presuper\sigma Y_\lambda) \simeq
\coprod_{\lambda\in L} \Ext^1_{kQ^\op}(F(M), \presuper\sigma Y_\lambda).\]
It follows from Lemma ~\ref{lem:comhe} that $F(M)$ is  finitely presented.

$``\Longleftarrow"$
Assume that $X=F(M)$ is a finitely presented $kQ^\op$-module.
Let us take a set $\{T_\lambda \}_{\lambda\in L}$  of  differential projective $kQ/J^2$-modules.
By Lemma ~\ref{lem:min}(2) we have $T_\lambda = T'_\lambda\coprod T''_\lambda$
such that $T_\lambda'$ is contractible and $T_\lambda''$ is reduced.

By Lemma ~\ref{lem:comhe} and Theorem ~\ref{thm:repiso}(4) we have isomorphisms
\[  {\uDiff(kQ/J^2\-\Proj)}(M, \coprod_{\lambda\in L} T_\lambda)
   \simeq  \coprod_{\lambda\in L}  {\uDiff(kQ/J^2\-\Proj)}(M,T_\lambda).\]
Then $M$ is compact in $\uDiff(kQ/J^2\-\Proj)$.

(3) This follows directly from Theorem ~\ref{thm:repiso}(4) and Lemma ~\ref{lem:fiso}(2).
\end{proof}

We have two full subcategories of  $\T=\uDiff(kQ/J^2\-\Proj)$.
Denote by $\T^{c}$ is the full subcategory formed by compact objects and
by $\T^{fd}$ is the  full subcategory
formed by objects  $M$ such that its reduced part $M''$ is finite dimensional.

\begin{cor}
We have an inclusion $\T^{fd}\subseteq \T^{c}$.
Moreover, the equality holds if and only if the quiver $Q$ is  acyclic.
\end{cor}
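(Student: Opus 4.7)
The plan is to apply Proposition \ref{prop:restric} throughout, which translates the defining conditions of $\T^{fd}$ and $\T^c$ into finite-dimensionality and finite presentation of $F(M'')$ as a $kQ^\op$-module, and then to combine this translation with basic properties of the algebra $kQ^\op$.

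For the inclusion $\T^{fd}\subseteq \T^c$, I would take $M\in\T^{fd}$. By definition the reduced part $M''$ is finite dimensional, so Proposition \ref{prop:restric}(1) gives that $F(M'')$ is a finite-dimensional $kQ^\op$-module. A standard argument shows that every finite-dimensional $kQ^\op$-module is finitely presented: the path algebra $kQ^\op$ is finitely generated as a $k$-algebra (by the arrows and trivial paths), and feeding a $k$-basis of the module together with the finitely many relations coming from the action of these generators produces a presentation with finitely generated kernel. Proposition \ref{prop:restric}(2) then gives that $M''$, and hence $M$ itself in the homotopy category, is compact.

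For the equivalence, the easy direction assumes $Q$ is acyclic. Then $Q^\op$ is acyclic as well, so $kQ^\op$ is finite-dimensional over $k$. Any finitely presented module over a finite-dimensional $k$-algebra is automatically finite-dimensional, so for any $M\in\T^c$ Proposition \ref{prop:restric}(2) makes $F(M'')$ finite dimensional, and Proposition \ref{prop:restric}(1) then forces $M\in\T^{fd}$.

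For the converse, I would assume $Q$ has an oriented cycle and pick a vertex $i$ lying on one. Then $Q^\op$ carries an oriented cycle through $i$, so the indecomposable projective $P_i=kQ^\op e_i$ has a basis consisting of all paths in $Q^\op$ starting at $i$, which is infinite; as a finitely generated projective module $P_i$ is, however, finitely presented. Invoking the density of $F$ from Theorem \ref{thm:repiso}(1), I choose a reduced differential projective $M$ with $F(M)\cong P_i$. Then Proposition \ref{prop:restric}(2) places $M$ in $\T^c$ while Proposition \ref{prop:restric}(1) shows $M\notin\T^{fd}$, witnessing the strict inclusion. The one genuine subtlety in the plan is the first step, namely confirming that finite-dimensional $kQ^\op$-modules stay finitely presented even when $kQ^\op$ itself is infinite-dimensional; once this is in hand, every remaining step is a direct consequence of Proposition \ref{prop:restric} and the density in Theorem \ref{thm:repiso}.
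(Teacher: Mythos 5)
Your proposal is correct and takes essentially the same route as the paper: both reduce the corollary to Proposition~\ref{prop:restric} together with the fact that finite-dimensional $kQ^\op$-modules are finitely presented, and that these two classes coincide exactly when $Q$ is acyclic (the paper simply cites this fact, you prove it, including the explicit infinite-dimensional finitely generated projective when $Q$ has a cycle). One minor caution: since $kQ^\op$ need not be Noetherian when $Q$ has cycles, your informal ``finitely generated kernel'' step is best justified by the standard two-term projective resolution of a module over a hereditary path algebra of a finite quiver, which is finitely generated in both degrees whenever the module is finite dimensional.
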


\begin{proof}
Recall that the category of all  finite-dimensional $kQ^\op$-modules is contained in
the category of all finitely presented  $kQ^\op$-modules, these two categories
coincide if and only if  the quiver $Q$ is acyclic.
Then the corollary follow directly from Proposition ~\ref{prop:restric}.
\end{proof}

By Theorem ~\ref{thm:repiso}
we have the following; compare ~\cite[Theorem 2]{RZ2017}.

\begin{cor}
The bijection in Corollary ~\ref{cor:bije} carries finite-dimensional objects to finite-dimensional objects and
carries compact objects to finitely presented objects.
\end{cor}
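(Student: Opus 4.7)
The plan is to deduce this corollary immediately from Proposition \ref{prop:restric}, where the substantive work has already been done. First I would recall how the bijection in Corollary \ref{cor:bije} is realized: by Lemma \ref{lem:min}(2) each object $M$ in $\uDiff(kQ/J^2\-\Proj)$ has a (unique up to isomorphism) reduced direct summand $M''$, and the bijection sends the isoclass of $M$ to the isoclass of $F(M'') = F(M)$ in $kQ^\op\-\Mod$. Thus, for any homotopy-invariant property on the left and any isomorphism-invariant property on the right, I may test the correspondence on reduced representatives.

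For the finite-dimensional half, I would apply Proposition \ref{prop:restric}(1) to $M''$: it says a reduced differential projective $kQ/J^2$-module is finite-dimensional if and only if its top is finite-dimensional over $kQ^\op$. Since the contractible summand $M'$ is zero in the homotopy category, finite-dimensionality of the isoclass of $M$ in $\uDiff(kQ/J^2\-\Proj)$ is naturally identified with finite-dimensionality of $M''$, and the claim follows.

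For the compact half, I would apply Proposition \ref{prop:restric}(2) to $M''$: it says a reduced $M''$ is compact in $\uDiff(kQ/J^2\-\Proj)$ if and only if $F(M'')$ is finitely presented as a $kQ^\op$-module. Compactness is a homotopy-invariant notion, so again the statement passes from reduced representatives to arbitrary isoclasses under the bijection.

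I do not expect any real obstacle: the corollary is essentially a repackaging of Proposition \ref{prop:restric}(1)--(2) in the language of the bijection from Corollary \ref{cor:bije}, and the proof should be a one- or two-line invocation of these two facts.
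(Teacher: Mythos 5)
Your proposal is correct and coincides with the paper's intent: the paper states this corollary without a separate proof, presenting it as an immediate consequence of Theorem~\ref{thm:repiso} via Proposition~\ref{prop:restric}(1)--(2), which is exactly your deduction. Your explicit reduction to reduced representatives $M''$ (legitimate since $M\simeq M''$ in the homotopy category and the paper's notion of a finite-dimensional object in $\uDiff(kQ/J^2\-\Proj)$ is precisely finite-dimensionality of the reduced part) fills in the only detail the paper leaves implicit.
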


In particular, if we take $Q$ to be the $n$-loop quiver with $n\geq 2$,
then the bijection between finite-dimensional objects has already studied in \cite[Theorem 3.6]{Ran2015}.

Let $\T$ be a triangulated category admitting all small coproducts.
An object  $S$ in $\T$ is said to be  {\em compact}  ~\cite{Nee1996}
if for any set $\{T_\lambda\}_{\lambda \in L}$ of objects in $\T$, the natural monomorphism
\[\coprod_{\lambda \in L} \Hom_\T(S,  T_\lambda) \lto  \Hom_\T(S, \coprod_{\lambda \in L} T_\lambda)\]
is an epimorphism (and thus isomorphism).

Recall that a triangulated category $\T$ is said to be {\em compactly generated} ~\cite{Nee1996}
if $\T$ admits all small coproducts, and there exists a set $\S$ of objects in $\T$ such that
\begin{enumerate}
  \item Given  $T \in \T$, if $\T(\Sigma^n S,T) = 0$ for every $S \in \S$ and $n\in \bbZ$,
      then $T\simeq 0$;
  \item Every object $S \in\S$ is compact.
\end{enumerate}
Here, $\Sigma$ denotes the translation functor of $\T$.
The set $\S$ is called a {\em compact generating set} for $\T$.
In particular, if $\S = \{S_0\}$ is a singleton,
then  $S_0$ is called a {\em compact generator} for $\T$.

We now prove that $C$ is a compact generator for $\uDiff(kQ/J^2\-\Proj)$.

\begin{thm} \label{thm:comgen}
The homotopy category $\uDiff(kQ/J^2\-\Proj)$ is compactly generated,
where  $C$ is a compact generator for it.
\end{thm}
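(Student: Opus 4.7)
The plan is to verify the two defining properties of a compact generating set for the singleton $\{C\}$ in $\T = \uDiff(kQ/J^2\-\Proj)$: namely, that $C$ is compact, and that $C$ detects the zero object up to all shifts.

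For compactness, I would apply Proposition~\ref{prop:restric}(2): $C$ is compact in $\T$ if and only if $F(C)$ is finitely presented as a $kQ^\op$-module. Since it was observed at the start of this section that $F(C)$ is isomorphic to the regular $kQ^\op$-module, and since $Q$ is a finite quiver, $F(C)$ is cyclic (generated by $\sum_{i \in Q_0} e_i$) and projective over itself; the trivial presentation $0 \to kQ^\op \xrightarrow{\mathrm{id}} kQ^\op \to F(C) \to 0$ then shows $F(C)$ is finitely presented.

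For the generation condition, suppose $T$ in $\T$ satisfies $\T(\Sigma^n C, T) = 0$ for every $n \in \bbZ$. I would just use the instance $n = 0$: by Lemma~\ref{lem:min}(2) we decompose $T \simeq T' \coprod T''$ with $T'$ contractible and $T''$ reduced, so $T \simeq T''$ in $\T$; then Lemma~\ref{lem:fiso}(1) gives $F(T'') \simeq \T(C, T) = 0$, i.e., $T''$ has vanishing top. Since $T''$ is projective over the finite-dimensional (hence semiperfect) algebra $kQ/J^2$, it decomposes as a coproduct of indecomposable projectives of the form $(kQ/J^2)e_i$, each with nonzero top; the vanishing of $F(T'')$ therefore forces $T'' = 0$, whence $T \simeq 0$ in $\T$.

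The argument is essentially a direct application of the functorial machinery of the previous sections; there is no substantive obstacle, since Proposition~\ref{prop:restric}(2) and Lemma~\ref{lem:fiso}(1) translate both axioms into statements on the $kQ^\op$-module side. The only non-formal input is the structural fact that a nonzero projective $kQ/J^2$-module has a nonzero top, which is what makes $F$ detect the zero object among reduced differential projectives.
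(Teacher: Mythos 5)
Your proof is correct, and the generation half is essentially the paper's own argument: decompose $T=T'\coprod T''$ via Lemma~\ref{lem:min}(2), apply Lemma~\ref{lem:fiso}(1) to get $F(T'')=0$, and use that a nonzero projective $kQ/J^2$-module has nonzero top to conclude $T''=0$ (the paper states this more tersely as ``since $T''$ is projective, $T''=0$''). Where you diverge is the compactness half: the paper verifies compactness of $C$ directly, using Lemma~\ref{lem:fiso}(1) on both $\coprod_\lambda T_\lambda$ and each $T_\lambda$, and then the fact that $F$ commutes with small coproducts (Theorem~\ref{thm:repiso}(2)); you instead invoke Proposition~\ref{prop:restric}(2) together with $F(C)\simeq kQ^\op$. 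This is legitimate and not circular, since Proposition~\ref{prop:restric} is proved before the theorem and its ``$\Longleftarrow$'' direction rests only on Lemma~\ref{lem:comhe} (Strebel's hereditary criterion) and Theorem~\ref{thm:repiso}(4); your route buys brevity by outsourcing the coproduct bookkeeping to that proposition, while the paper's direct computation keeps the proof self-contained and avoids passing through $\Ext^1$-commutation with coproducts. One cosmetic slip: the displayed sequence $0\to kQ^\op\xrightarrow{\mathrm{id}}kQ^\op\to F(C)\to 0$ is not exact as written (it would force $F(C)=0$); what you mean is simply that $F(C)\simeq kQ^\op$ is a finitely generated free module, so $kQ^\op\xrightarrow{0}kQ^\op\to F(C)\to 0$ exhibits it as finitely presented.
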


\begin{proof}
Let $T$ be an object in $\uDiff(kQ/J^2\-\Proj)$.
We have  $T = T'\coprod T''$ such that $T'$ is contractible
and $T''$ is reduced by Lemma ~\ref{lem:min}(2).

Suppose ${\uDiff(kQ/J^2\-\Proj)}(C,T) = 0$.
Then $F(T'')= 0$ by Lemma ~\ref{lem:fiso}(1).
Since $T''$ is projective over $kQ/J^2$,
we have $T'' =0$ and $T  = T'$ is contractible.
Then $T \simeq 0$ in   $\uDiff(kQ/J^2\-\Proj)$.
We infer that $C$ is a generator for  $\uDiff(kQ/J^2\-\Proj)$.

Take a set  $\{T_\lambda\}_{\lambda\in L}$ of objects in $\uDiff(kQ/J^2\-\Proj)$.
Then  Lemma ~\ref{lem:min}(2) yields that $T_\lambda = T'_\lambda\coprod T''_\lambda$,
where $T_\lambda'$ is contractible and $T_\lambda''$ is reduced.

By  Lemma ~\ref{lem:fiso}(1) there are isomorphisms
\[
{\uDiff(kQ/J^2\-\Proj)}(C, \coprod_{\lambda\in L} T_\lambda)
 \simeq  F(\coprod_{\lambda\in L} T''_\lambda)     \]
and
\[  \coprod_{\lambda\in L} {\uDiff(kQ/J^2\-\Proj)}(C,T_\lambda)
\simeq \coprod_{\lambda\in L} F(T''_\lambda).\]
Recall from Theorem ~\ref{thm:repiso}(2) that $F$ commutes with all small coproducts.
It follows that $C$ is a compact object in $\uDiff(kQ/J^2\-\Proj)$.

Therefore  $C$ is a compact generator for $\uDiff(kQ/J^2\-\Proj)$.
\end{proof}

\section{Virtually Gorenstein algebras}
In this section, we study the virtue Gorensteiness of algebras.
Here, we recall the notion of
Gorenstein projective modules and Gorenstein injective modules;
see ~\cite{EJ1995} for more details.

Given a ring $\Lambda$,
a complex $P^\bullet$ of  projective $\Lambda$-modules
is said to be {\em totally acyclic}  if $P^\bullet$ is acyclic and
$\Hom_\Lambda(P^\bullet, T)$ is acyclic for every projective $\Lambda$-module $T$.
A $\Lambda$-module $M$ is said to be {\em Gorenstein projective}   if
there exists a totally acyclic complex $P^\bullet$ of  projective $\Lambda$-modules
such that $M$ is isomorphic to $\Coker (d^{-1}\colon P^{-1} \to P^{0})$.
The complex $P^\bullet$ is called a {\em complete projective resolution} of $M$.

Dually, a complex $I^\bullet$ of  injective $\Lambda$-modules
is said to be {\em totally acyclic}  if $I^\bullet$ is acyclic and
$\Hom_\Lambda(T,I^\bullet)$ is acyclic for every injective $\Lambda$-module $T$.
A $\Lambda$-module $M$ is said to be {\em Gorenstein injective}   if
there exists a totally acyclic complex $I^\bullet$ of  injective $\Lambda$-modules
such that $M$ is isomorphic to $\Ker (d^{0}\colon I^{0} \to I^{1})$.
The complex $I^\bullet$ is called a {\em complete injective resolution} of $M$.

Denote by $\Lambda\-\Mod$ the category of all $\Lambda$-modules.
Let us  denote by  $\Lambda\-\Proj$ the full subcategory of projective $\Lambda$-modules and
denote  by $\Lambda\-\GProj$ the full subcategory of  Gorenstein projective $\Lambda$-modules.

We need the following facts; see ~\cite{Buc1987,EJ2000} for more details.

\begin{enumerate}
  \item $\Lambda\-\GProj$ is a Frobenius category with $\proj(\Lambda\-\GProj)=\Lambda\-\Proj$.
  \item The stable category $\Lambda\-\uGProj$ is a triangulated category.
  \item  $\Lambda$ is a quasi-Frobenius ring if and only if $\Lambda\-\GProj = \Lambda\-\Mod$.
  \item If the left global dimension of $\Lambda$ is finite, then $\Lambda\-\GProj = \Lambda\-\Proj$.
\end{enumerate}

Let $\Lambda[\epsilon] = \Lambda[T]/\langle T^2\rangle$ be the ring  of dual numbers over $\Lambda$.
Note that differential modules over $\Lambda$ are just  modules over $\Lambda[\epsilon]$.
In particular, if $\Lambda$ is an algebra over a field $k$,
then the  algebra $\Lambda[\epsilon]$ is isomorphic to the tensor product
$ \Lambda \otimes_k k[\epsilon]$ of algebras.
Here, we recall that $k[\epsilon]=k[T]/\langle T^2\rangle$ is the algebra of dual numbers over $k$;
it is a selfinjective algebra.

By ~\cite[Theorem 1.1]{Wei2015} a differential $\Lambda$-module
$(M, d)$ is Gorenstein projective if and only if the underlying $\Lambda$-module $M$ is Gorenstein projective.
Then we know that every differential projective $\Lambda$-module is Gorenstein projective
since every projective module is Gorenstein projective.

Let $\Lambda$ be an Artin algebra.
Recall that  $\Lambda$ is said to  {\em Gorenstein} ~\cite{AR1991}
if the injective dimension  of $\Lambda$ and the injective dimension of $\Lambda^\op$ are both finite.
Algebras of finite global dimension  and selfinjective algebras are Gorenstein algebras.

We also recall that $\Lambda$ is said to be {\em virtually Gorenstein} ~\cite{Bel2005} if for every $\Lambda$-module $M$,
the functor $\Ext^1_\Lambda(-,M)$ vanishes on all Gorenstein projective $\Lambda$-modules if and only if
the functor $\Ext^1_\Lambda(M,-)$ vanishes on all Gorenstein injective $\Lambda$-modules.
Algebras of finite representation type and   Gorenstein algebras
are  virtually Gorenstein algebras.
Examples of non-virtually Gorenstein algebras can be founded in ~\cite{BK2008,Yos2003}.

We need the following.

\begin{lem} \label{lem:gorvir}
Let $\Lambda$ and $\Gamma$ be finite-dimensional algebras over a field $k$.
\begin{enumerate}
  \item $\Lambda$ is virtually Gorenstein if and only if every reduced compact object in the stable category
  $\Lambda\-\uGProj$ is finite dimensional.
  \item $\Lambda\otimes_k\Gamma$ is   Gorenstein  if and only if $\Lambda$ and $\Gamma$ are  Gorenstein.
   \item $\Lambda\otimes_k\Gamma$ is  selfinjective  if and only if $\Lambda$ and $\Gamma$ are selfinjective.
\end{enumerate}
\end{lem}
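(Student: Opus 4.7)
The plan is to handle the three parts separately, with item (1) being the substantive one and items (2)--(3) reducing to standard tensor product formulas.

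For (1), the approach is to invoke the characterization of virtual Gorensteinness due to Beligiannis: an Artin algebra $\Lambda$ is virtually Gorenstein if and only if every compact object of the triangulated category $\Lambda\text{-}\uGProj$ lies in the image of the natural functor $\Lambda\text{-}\umod_{\Gproj}\to \Lambda\text{-}\uGProj$, i.e.\ is stably isomorphic to a finitely generated Gorenstein projective module. The notion of \emph{reduced} here should mean ``having no nonzero projective direct summand'', which is the analogue of the differential setting in Section~4 since projectives are the projective-injective objects of the Frobenius category $\Lambda\text{-}\GProj$; every object decomposes as a reduced part and a projective part, uniquely up to isomorphism. Under this identification, a reduced compact object corresponds to a finitely generated Gorenstein projective module without projective summand, and over an Artin $k$-algebra being finitely generated is equivalent to being finite dimensional. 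Both directions of (1) then follow from the Beligiannis characterization.

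For (2), the plan is to establish the additivity formula
\[
\mathrm{id}_{\Lambda\otimes_k\Gamma}(\Lambda\otimes_k\Gamma)=\mathrm{id}_\Lambda(\Lambda)+\mathrm{id}_\Gamma(\Gamma),
\]
together with its analogue for the opposite algebras (using $(\Lambda\otimes_k\Gamma)^\op\cong\Lambda^\op\otimes_k\Gamma^\op$). This rests on the observation that the $k$-linear duality satisfies $D(\Lambda\otimes_k\Gamma)\cong D(\Lambda)\otimes_k D(\Gamma)$ as $\Lambda\otimes_k\Gamma$-bimodules, and that over a field the tensor product of two injective resolutions is an injective resolution of the tensor product module over the tensor product algebra. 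Since Gorensteinness means both of the injective dimensions in the formula are finite, the statement is immediate from the additivity.

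For (3), the plan is to combine (2) with the fact that an Artin algebra is selfinjective if and only if its injective dimension is zero. The additivity formula forces each summand to vanish when the total does, giving one direction; the converse uses the bimodule isomorphism $\Lambda\otimes_k\Gamma\cong D(\Lambda)\otimes_k D(\Gamma)\cong D(\Lambda\otimes_k\Gamma)$ directly. The main obstacle, I expect, is item (1): it cannot be treated by a direct calculation and requires invoking Beligiannis' theorem on compact objects in $\Lambda\text{-}\uGProj$, together with a careful verification that ``reduced'' in the formulation matches the ``no nonzero projective summand'' condition used in that theorem via the Frobenius structure on $\Lambda\text{-}\GProj$.
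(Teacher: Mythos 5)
For part (1) you take the same route as the paper: the paper simply cites Beligiannis \cite[Theorem 8.2]{Bel2005} (see also \cite[Theorem 4]{BK2008}), and your reformulation --- compacts in $\Lambda\-\uGProj$ coincide, up to projective summands, with finitely generated (equivalently finite-dimensional) Gorenstein projectives --- is exactly the content of that citation, with ``reduced'' correctly read as ``no nonzero projective direct summand'' via the Frobenius structure on $\Lambda\-\GProj$. For parts (2) and (3) the paper again just cites \cite[Proposition 2.2]{AR1991}, whereas you attempt a self-contained argument, so here your route genuinely differs.

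In that self-contained argument there is a gap. Tensoring (finitely generated) injective resolutions and using $D(\Lambda\otimes_k\Gamma)\cong D(\Lambda)\otimes_k D(\Gamma)$ only yields the inequality
\[
\operatorname{id}_{\Lambda\otimes_k\Gamma}(\Lambda\otimes_k\Gamma)\;\le\;\operatorname{id}_\Lambda(\Lambda)+\operatorname{id}_\Gamma(\Gamma),
\]
i.e.\ the ``if'' directions of (2) and (3). The exact additivity you assert is not proved by this construction (the natural attempt via minimal resolutions runs into the fact that $\Lambda/\rad\Lambda\otimes_k\Gamma/\rad\Gamma$ need not be semisimple over an imperfect field), and it is precisely the unproved lower bound that you use for the ``only if'' directions, so as written those directions do not follow. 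They can be repaired without additivity: restriction along $\Lambda\to\Lambda\otimes_k\Gamma$ sends injective $\Lambda\otimes_k\Gamma$-modules to injective $\Lambda$-modules (since $D(\Lambda\otimes_k\Gamma)$ restricts to $D(\Lambda)^{\dim_k\Gamma}$) and sends the regular module to $\Lambda^{\dim_k\Gamma}$, which gives $\operatorname{id}_\Lambda(\Lambda)\le\operatorname{id}_{\Lambda\otimes_k\Gamma}(\Lambda\otimes_k\Gamma)$ and, for (3), $\Lambda\cong D(\Lambda)$ by Krull--Schmidt from $\Lambda^{\dim_k\Gamma}\cong D(\Lambda)^{\dim_k\Gamma}$; the same applies on the opposite side using $(\Lambda\otimes_k\Gamma)^\op\cong\Lambda^\op\otimes_k\Gamma^\op$. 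With that replacement your argument for (2) and (3) is complete, and is essentially the proof underlying the Auslander--Reiten proposition the paper invokes.
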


\begin{proof}
(1) This follows ~\cite[Theorem 8.2]{Bel2005}; see also ~\cite[Theorem 4]{BK2008}.

(2) and (3) These are taken from ~\cite[Proposition 2.2]{AR1991}.
\end{proof}

Let $k$ be a field and $Q$ be a finite quiver.
We investigate the Gorenstein projective differential modules over the radical square zero algebra $kQ/J^2$ of $Q$.
Recall that a finite connected quiver $Q$ is a basic cycle if the number of vertices
is equal to the number of arrows in $Q$
and all arrows  form an oriented cycle.

\begin{prop} \label{prop:diffgor}
Let $k$ be a field and  $Q$ be a finite connected quiver.
\begin{enumerate}
  \item If $Q$ is not a basic cycle, then  Gorenstein projective differential $kQ/J^2$-modules
        are just differential projective $kQ/J^2$-modules.
  \item Otherwise, every differential $kQ/J^2$-module is Gorenstein projective.
\end{enumerate}
\end{prop}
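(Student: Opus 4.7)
The plan is to reduce the proposition to a known classification of Gorenstein projective $kQ/J^2$-modules and then bootstrap through the criterion of Wei recalled just above the proposition: a differential $\Lambda$-module $(M,d)$ is Gorenstein projective over $\Lambda[\epsilon]$ if and only if its underlying $\Lambda$-module $M$ is Gorenstein projective over $\Lambda$. With $\Lambda = kQ/J^2$, it therefore suffices to describe the Gorenstein projective $kQ/J^2$-modules in terms of $Q$.

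For part (2), assume $Q$ is a basic cycle. Each indecomposable projective $P_i=(kQ/J^2)e_i$ then has length two with simple socle $S_j$, where $j$ is the terminus of the unique arrow starting at $i$. The induced map $i\mapsto j$ is a Nakayama permutation, so $kQ/J^2$ is a self-injective Nakayama algebra, hence quasi-Frobenius. By fact (3) in the list preceding the proposition, $\Lambda\-\GProj=\Lambda\-\Mod$, so every $kQ/J^2$-module is Gorenstein projective, and Wei's theorem gives the conclusion for differential modules.

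For part (1), assume $Q$ is connected and is not a basic cycle. The first step is to verify that $kQ/J^2$ is not self-injective. Since the socle of $P_i$ equals $\bigoplus_{\alpha\colon i\to j}S_j$, self-injectivity would force every vertex to have out-degree one, and the resulting Nakayama permutation would force the in-degree of every vertex to equal one as well; connectedness of $Q$ would then make $Q$ a basic cycle, contradicting our hypothesis. Having excluded self-injectivity, I invoke the theorem of Ringel and Xiong \cite{RX2012}, which extends Chen's dichotomy \cite{Che2012} from the finitely generated to the arbitrary setting and asserts that over a connected non-self-injective radical square zero algebra every Gorenstein projective module is projective. Combined with Wei's theorem, this identifies the Gorenstein projective differential $kQ/J^2$-modules with the differential projective ones.

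The only substantive obstacle is the size issue in part (1): differential projective $kQ/J^2$-modules need not be finitely generated, so it is essential to cite the Ringel--Xiong extension rather than Chen's original CM-free statement, which covers only finitely generated modules. The socle-top count ruling out self-injectivity is routine, and the entire transfer of Gorenstein projectivity between $\Lambda$ and $\Lambda[\epsilon]$ is absorbed by Wei's theorem.
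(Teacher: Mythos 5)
Your argument is essentially the paper's own proof: Wei's theorem reduces Gorenstein projectivity of a differential module to Gorenstein projectivity of its underlying $kQ/J^2$-module, part (1) is then the Ringel--Xiong theorem for arbitrary (not necessarily finitely generated) modules over a non-selfinjective connected radical square zero algebra, and part (2) uses selfinjectivity of $kQ/J^2$ when $Q$ is a basic cycle (the paper passes instead through selfinjectivity of $kQ/J^2[\epsilon]$, an immaterial variant of your route via the quasi-Frobenius fact plus Wei). One small slip in your added bridging step for (1): the formula $\soc P_i=\bigoplus_{\alpha\colon i\to j}S_j$ is wrong at a sink vertex, where $P_i=S_i$, and consequently the implication ``connected and not a basic cycle implies not selfinjective'' fails in the edge case of the one-vertex quiver with no arrows, for which $kQ/J^2=k$ is selfinjective yet not a basic cycle. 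The patch is easy --- at a sink a projective simple that is also injective forces the connected algebra to be semisimple, and in the semisimple case the conclusion of (1) is trivial since every module is projective --- but as written the verification of non-selfinjectivity does not cover these cases.
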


\begin{proof}
(1) Since $Q$ is not a basic cycle, by ~\cite[Theorem 2]{RX2012}
all Gorenstein projective modules over $kQ/J^2$ are projective.
By ~\cite[Theorem 1.1]{Wei2015}  Gorenstein projective differential $kQ/J^2$-modules
are just  differential projective $kQ/J^2$-modules.

(2) Since $Q$ is a basic cycle, the algebra $kQ/J^2$ is selfinjective.
Then $kQ/J^2[\epsilon]$ is selfinjective and thus every differential $kQ/J^2$-module is  Gorenstein projective.
\end{proof}

The following theorem provides a class of noncommutative Artin algebras
that are not virtually Gorenstein;
compare ~\cite[Theorem 6.1]{Yos2003}.

\begin{thm} \label{thm:virgor}
Let $k$ be a field and  $Q$ be a finite connected quiver.
\begin{enumerate}
  \item If $Q$ is  acyclic, then the algebra $kQ/J^2[\epsilon]$ is Gorenstein.
  \item If $Q$ is a basic cycle, then the algebra $kQ/J^2[\epsilon]$ is selfinjective.
  \item Otherwise, the algebra $kQ/J^2[\epsilon]$ is not virtually Gorenstein.
\end{enumerate}
\end{thm}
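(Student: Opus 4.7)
The plan is to handle the three cases of $Q$ separately, using the tools already assembled. For cases (1) and (2), I would apply the tensor-product criteria of Lemma~\ref{lem:gorvir}(2) and~(3) with the second factor $\Gamma = k[\epsilon]$, which is selfinjective (hence Gorenstein). In case~(1), $Q$ being acyclic forces $kQ/J^2$ to have finite global dimension — each simple $S_i$ admits a projective resolution of length at most the longest path ending at $i$ — so $kQ/J^2$ is Gorenstein, and Lemma~\ref{lem:gorvir}(2) concludes. In case~(2), $Q$ being a basic cycle makes $kQ/J^2$ a Nakayama selfinjective algebra, so Lemma~\ref{lem:gorvir}(3) immediately yields selfinjectivity of $kQ/J^2[\epsilon]$.

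The real content is case~(3), where $Q$ has oriented cycles but is not a basic cycle. Put $\Lambda = kQ/J^2[\epsilon]$; by Lemma~\ref{lem:gorvir}(1) it suffices to exhibit a reduced compact object of $\Lambda\-\uGProj$ that is not finite dimensional. Since $Q$ is not a basic cycle, Proposition~\ref{prop:diffgor}(1) tells us that the Gorenstein projective differential $kQ/J^2$-modules are exactly the differential projective ones, so (identifying the projective-injective objects of the two Frobenius structures — a projective $\Lambda$-module is precisely a contractible differential projective $kQ/J^2$-module, namely one of the form $P\otimes_k k[\epsilon]$) we obtain an equivalence of triangulated categories $\Lambda\-\uGProj \simeq \uDiff(kQ/J^2\-\Proj)$.

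The explicit witness is the compact generator $C = kQ/J^2 \otimes_{kQ_0} kQ^\op$ from Theorem~\ref{thm:comgen}. It was already noted that $C$ is reduced and that $F(C)$ is isomorphic to the regular $kQ^\op$-module. Since $kQ^\op$ is hereditary, its regular module is finitely presented, so $C$ is compact by Proposition~\ref{prop:restric}(2); on the other hand, because $Q$ has an oriented cycle, the algebra $kQ^\op$ is infinite dimensional, whence $C$ is not finite dimensional by Proposition~\ref{prop:restric}(1). Applying Lemma~\ref{lem:gorvir}(1) then shows $\Lambda$ is not virtually Gorenstein.

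The only real obstacle is the identification $\Lambda\-\uGProj \simeq \uDiff(kQ/J^2\-\Proj)$: Proposition~\ref{prop:diffgor}(1) gives equality of the underlying exact categories, but one still has to match up the two sets of projective-injective objects in order to transfer the word ``reduced'' (meaning no contractible summand in $\uDiff(kQ/J^2\-\Proj)$, versus no projective $\Lambda$-summand in $\Lambda\-\uGProj$) and the word ``compact'' across the equivalence. Once those bookkeeping steps are checked, Proposition~\ref{prop:restric} supplies the concrete obstruction and Lemma~\ref{lem:gorvir}(1) closes the argument.
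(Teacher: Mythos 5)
Your proposal is correct and follows essentially the same route as the paper: cases (1) and (2) via Lemma~\ref{lem:gorvir}(2),(3) with $\Gamma=k[\epsilon]$, and case (3) by combining Proposition~\ref{prop:diffgor}(1) with the compact generator $C$ of Theorem~\ref{thm:comgen}, which is reduced and infinite dimensional when $Q$ has oriented cycles, then invoking Lemma~\ref{lem:gorvir}(1). The extra care you take in identifying the projective-injective objects (contractible differential projectives versus projective $kQ/J^2[\epsilon]$-modules) and in justifying compactness and infinite-dimensionality of $C$ via Proposition~\ref{prop:restric} just makes explicit what the paper leaves implicit.
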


\begin{proof}
(1) If $Q$ is  acyclic, then the algebra $kQ/J^2$  has finite global dimension.
By Lemma ~\ref{lem:gorvir}(2) the algebra $kQ/J^2[\epsilon]$ is Gorenstein.

(2) If  $Q$ is a basic cycle, then the algebra $kQ/J^2$ is selfinjective.
By Lemma ~\ref{lem:gorvir}(3) the algebra $kQ/J^2[\epsilon]$ is selfinjective.

(3) Since $Q$  is not a basic cycle,
by Proposition ~\ref{prop:diffgor}(1) the homotopy category of differential projective $kQ/J^2$-modules
is exactly the stable category  of Gorenstein projective $kQ/J^2[\epsilon]$-modules.

Since $Q$ has oriented cycles,
the compact generator in Theorem ~\ref{thm:comgen} is reduced and not finite dimensional.
It follows from Lemma ~\ref{lem:gorvir}(1) that the algebra $kQ/J^2[\epsilon]$ is not virtually Gorenstein.
\end{proof}

\begin{rmk}
Following ~\cite{RX2012} we know that the radical square zero algebra $kQ/J^2$
is virtually Gorenstein for every finite quiver $Q$.

Let $k$ be a field of characteristic 2, then
the algebra $kQ/J^2[\epsilon]$ is isomorphic to the
group algebra $(kQ/J^2)C_2$ where $C_2$ is the cyclic group of order 2.
Now if $Q$ is a quiver in Theorem ~\ref{thm:virgor}(3),
then the group algebra $(kQ/J^2)C_2$ is not virtually Gorenstein; compare \cite[Proposition 3.1]{BS2015}.
\end{rmk}

We end this section by an example.

\begin{exm}
Let $k$ be a field and $Q, Q'$ be the following quivers.
\begin{align*}
Q\colon\quad \xymatrix{{}^\bullet_1 \ar@(ur,ul)_\alpha \ar[r]^{\beta}  &
{}^\bullet_2}& \qquad \qquad Q'\colon\quad \xymatrix{{}^\bullet_1  \ar@(ur,ul)_\alpha\ar@(ul,dl)_{\epsilon_1} \ar[r]^{\beta}
&{}^\bullet_2 \ar@(ur,dr)^{\epsilon_2}}
 \end{align*}

Let $I$ be the ideal of $kQ'$ generated by
$\{\alpha^2,\beta\alpha,\epsilon_1^2,\epsilon_2^2,\alpha\epsilon_1-\epsilon_1\alpha, \beta\epsilon_1-\epsilon_2\beta\}$.
Then the   algebras $kQ'/I$  and $kQ/J^2[\epsilon]$ are isomorphic.

By Theorem ~\ref{thm:virgor}(2) the algebra $kQ'/I$ is not virtually Gorenstein.
\end{exm}

{\bf\noindent Acknowledgements.}
The author is very grateful  to Prof. ~Xiao-Wu ~Chen and Prof. ~Yu ~Ye for numerous inspiring ideas.
The author thanks Dr. ~Zhe ~Han and Dr. Bo ~Hou for many discussions in the seminar at Henan University.
The work is supported by the  Natural Science Foundation of China  (No. 11571329).


\begin{thebibliography}{10}
\bibitem{AB1969}
{M. Auslander and M. Bridger},
{\em Stable module theory},
Memoirs of the American Mathematical Society, {\bf 94}.
American Mathematical Society, Providence, R. I., 1969.










\bibitem{AR1991}
{M. Auslander and I. Reiten},
{\em Cohen-Macaulay and Gorenstein Artin algebras},
Representation theory of finite groups and finite-dimensional algebras (Bielefeld, 1991), 221--245,
Progress in Mathematics, {\bf 95}.
Birkh\"{a}user, Basel, 1991.


\bibitem{ARS1995}
{M. Auslander, I. Reiten and S. O. Smal{\o}},
{\em Representation theory of Artin algebras},
Cambridge Studies in Advanced Mathematics, {\bf 36}.
Cambridge University Press, Cambridge, 1995.


\bibitem{ABI2007}
{L. L. Avramov, R.-O. Buchweitz and S. Iyengar},
{\em Class and rank of differential modules},
Invent. Math. {\bf 169} (2007), no. 1, 1--35.



\bibitem{AM2002}
{L. L. Avramov and A. Martsinkovsky},
{\em Absolute, relative, and Tate cohomology of modules of finite Gorenstein dimension},
Proc. London Math. Soc. (3) {\bf 85} (2002), no. 2, 393--440.

\bibitem{BS2015}
{A. Bahlekeh and  Sh. Salarian},
{\em Virtual Gorensteinness over group algebras},
Kyoto J. Math. 55 (2015), no. 1, 129--141.

\bibitem{BGS1996}
{A. A. Beilinson,  V. Ginzburg and W. Soergel},
{\em Koszul duality patterns in representation theory},
J. Amer. Math. Soc. 9 (1996), no. 2, 473--527.

\bibitem{Bel2005}
{A. Beligiannis},
{\em Cohen-Macaulay modules, (co)torsion pairs and virtually Gorenstein algebras},
J. Algebra {\bf 288} (2005), no. 1, 137--211.





\bibitem{BK2008}
{A. Beligiannis and H. Krause},
{\em Thick subcategories and virtually Gorenstein algebras},
Illinois J. Math. {\bf 52} (2008), no. 2, 551--562.



\bibitem{Buc1987}
{R.-O. Buchweitz},
{\em Maximal Cohen-Macaulay modules and Tate-cohomology over Gorenstein rings}.
Unpublished Manuscript,
Available at: http://hdl.handle.net/1807/16682, 1987.



\bibitem{Che2012}
{X.-W. Chen},
{\em  Algebras with radical square zero are either self-injective or CM-free},
Proc. Amer. Math. Soc. {\bf 140} (2012), no. 1, 93--98.

\bibitem{EJ1995}
{E. E. Enochs and  O. M. G. Jenda},
{\em Gorenstein injective and projective modules},
Math. Z. {\bf 220} (1995), no. 4, 611--633.


\bibitem{EJ2000}
{E. E. Enochs and  O. M. G. Jenda},
{\em Relative homological algebra},
De Gruyter Expositions in Mathematics, {\bf 30}.
Walter de Gruyter \& Co., Berlin, 2000.


\bibitem{GR1992}
{P. Gabriel, and A. V.Roiter},
{\em Representations of finite-dimensional algebras},
Encyclopaedia Math. Sci., {\bf 73}, Algebra, VIII.
Springer, Berlin, 1992.


\bibitem{Hap1988}
{D. Happel},
{\em Triangulated categories in the representation theory of finite-dimensional algebras},
London Mathematical Society Lecture Note Series, {\bf 119}.
Cambridge University Press, Cambridge, 1988.





\bibitem{Kra2005}
{H. Krause},
{\em The stable derived category of a Noetherian scheme},
Compos. Math. {\bf 141} (2005), no. 5, 1128--1162.

\bibitem{Nee1996}
{A. Neeman},
{\em The Grothendieck duality theorem via Bousfield's techniques and Brown representability},
J. Amer. Math. Soc. {\bf 9} (1996), no. 1, 205--236.

\bibitem{Ran2015}
{D. A. Rangel Tracy},
{\em A description of totally reflexive modules for a class of non-Gorenstein rings},
arXiv preprint (2015), arXiv:1510.04922.



\bibitem{RX2012}
{C. M. Ringel and B.-L. Xiong},
{\em On radical square zero rings},
Algebra Discrete Math. {\bf 14} (2012), no. 2, 297--306.

\bibitem{RZ2017}
{C. M. Ringel and P. Zhang},
{\em Representations of quivers over the algebra of dual numbers},
J. Algebra {\bf 475} (2017), 327--360.


\bibitem{Str1976}
{R. Strebel},
{\em A homological finiteness criterion},
Math. Z. {\bf 151} (1976), no. 3, 263--275.


\bibitem{Ver1996}
{J.-L. Verdier}
{\em Des cat¨¦gories d¨¦riv¨¦es des cat¨¦gories ab¨¦liennes},
Ast¨¦risque, {\bf 239}.
1996.


\bibitem{Wei2015}
{J. Wei},
{\em Gorenstein homological theory for differential modules},
Proc. Roy. Soc. Edinburgh Sect. A {\bf 145} (2015), 639--655.


\bibitem{Yos2003}
{Y. Yoshino},
{\em Modules of G-dimension zero over local rings with the cube of maximal ideal being zero},
Commutative algebra, singularities and computer algebra (Sinaia, 2002), 255--273,
NATO Sci. Ser. II Math. Phys. Chem., {\bf 115}.
Kluwer Acad. Publ., Dordrecht, 2003.
\end{thebibliography}
\end{document}